\definecolor{MyBlue}{rgb}{.15,.1,.9}
\definecolor{MyRed}{rgb}{.9,.1,.05}
\definecolor{MyGreen}{rgb}{.1,.7,.3}
\newcommand{\clsp}{\overline{\operatorname{span}}}
\newcommand{\opsp}{\operatorname{span}}
\newcommand{\Aut}{\operatorname{Aut}}
\newcommand{\Obj}{\operatorname{Obj}}
\newcommand{\Mor}{\operatorname{Mor}}
\newcommand{\dom}{\operatorname{dom}}
\newcommand{\cod}{\operatorname{cod}}
\newcommand{\id}{\operatorname{id}}
\newcommand{\ZZ}{\mathbb{Z}}
\newcommand{\CC}{\mathbb{C}}
\newcommand{\DD}{\mathbb{D}}
\newcommand{\NN}{\mathbb{N}}
\newcommand{\TT}{\mathbb{T}}
\newcommand{\Oo}{\mathcal{O}}
\newcommand{\Kk}{\mathcal{K}}
\newcommand{\Cc}{\mathcal{C}}
\newcommand{\Ll}{\mathcal{L}}
\newcommand{\Aa}{\mathcal{A}}
\newcommand{\Bb}{\mathcal{B}}
\newcommand{\dsc}{\oplus_C}
\newcommand{\dsz}{\oplus_Z}
\newcommand{\hf}{H_\infty^{\rm fin}}
\newtheorem{theorem}{Theorem}[section]
\newtheorem{lemma}[theorem]{Lemma}
\newtheorem*{lemma*}{Lemma}
\newtheorem{prop}[theorem]{Proposition}
\theoremstyle{remark}
\newtheorem{rmk}[theorem]{Remark}
\newtheorem{exm}[theorem]{Example}
\newtheorem{exm*}{Example}
\theoremstyle{definition}
\newtheorem{defn}[theorem]{Definition}
\begin{document}

\title[$C^*$-algebras associated to $C^*$-correspondences]{$C^*$-algebras associated to
$C^*$-correspondences and applications to mirror quantum spheres}

\author{David Robertson}
\author{Wojciech Szyma{\'n}ski}

\date{10 January 2010}

\renewcommand{\sectionmark}[1]{}

\vspace{7mm}
\begin{abstract}
The structure of the $C^*$-algebras corresponding to even-dimensional mirror quantum
spheres is investigated. It is shown that they are isomorphic to both Cuntz-Pimsner
algebras of certain $C^*$-correspondences and $C^*$-algebras of certain labelled graphs.
In order to achieve this, categories of labelled graphs and
$C^*$-correspondences are studied. A functor from labelled graphs to $C^*$-correspondences
is constructed, such that the corresponding associated $C^*$-algebras are isomorphic.
Furthermore, it is shown that $C^*$-correspondences for the mirror quantum spheres arise
via a general construction of restricted direct sum.
\end{abstract}

\maketitle

\vfill
\noindent {\bf MSC 2010}: 46L08, 46L65, 46L85

\vspace{3mm}
\noindent {\bf Keywords}: C*-correspondence, labelled graph, quantum sphere, Cuntz-Pimsner algebra

\vspace{3mm}
\noindent This work has been supported in part by: the FNU Rammebevilling grant `Operator
algebras and applications' (2009--2011), the Marie Curie Research Training Network
MRTN-CT-2006-031962 EU-NCG, the NordForsk Research Network `Operator algebra and dynamics',
the Marie Curie International Staff Exchange Scheme PIRSES-GA-2008-230836,
and the Polish Government grant N201 1770 33.

\newpage

\section{Introduction}

Our original motivation for this study came from the desire to better understand the
$C^*$-algebraic structure of mirror quantum spheres. They were first defined in dimension
2 in \cite{HaMaSz}, and then in full generality in \cite{HoSz3,HoSz}. It was noted from the
beginning that in even dimensions these differ on the $C^*$-algebraic level from the
Euclidean quantum spheres arising from quantum groups. In particular, the Euclidean
quantum spheres correspond to certain graph $C^*$-algebras, \cite{HaLa,HoSz2}, but such
a convenient description was lacking for the even dimensional mirror quantum spheres.

In the present paper, we show that the $C^*$-algebras of even dimensional mirror
quantum spheres are naturally isomorphic to both Cuntz-Pimsner algebras of certain
$C^*$-correspondences (in the sense of \cite{Ka1}) and $C^*$-algebras of certain
labelled graphs (in the sense of \cite{BaPa,BaPa2}). We arrive at these two realisations
in the following way. At first, we define a category of $C^*$-correspondences and show that
the process of associating $C^*$-algebras to them is functorial. Furthermore, this
functor sends restricted direct sums of $C^*$-correspondences to pull-backs of their
$C^*$-algebras. Restricted direct sums of Hilbert modules were introduced in \cite{BaGu2},
and this construction is extended to $C^*$-correspondences in the present article.
We then use a realisation of the mirror quantum spheres as pull-backs of quantum discs
to obtain the appropriate $C^*$-correspondences. To this end, we must first find
suitable $C^*$-correspondences for the quantum discs.

A close examination of the above mentioned $C^*$-correspondences results in finding
labelled graphs for the mirror quantum spheres. The explicit forms of the isomorphisms are
obtained by comparing the defining relations.

Our paper is organised as follows.
We begin Section \ref{sec:correspondences} by recalling the definition of $C^*$-correspondences
and the $C^*$-algebra associated to them, following the approach of Katsura \cite{Ka1}. We also
show that the class of $C^*$-correspondences, together with appropriately defined morphisms forms
a category. Thus we obtain a functor which maps a $C^*$-correspondence $(X,A)$ to the
$C^*$-algebra $\Oo_X$, and sends morphisms between $C^*$-correspondences to $C^*$-homomorphisms.

In Section \ref{sec:gluing}, we consider restricted direct sums of $C^*$-correspondences,
which were first defined on the level of Hilbert modules by Baki\'c and Gulja\v{s} in \cite{BaGu2}.
We show that this construction lifts to a pull-back on the level of corresponding $C^*$-algebras.

We use this result in Section \ref{sec:mirrorsphere} in order to obtain a new representation
of the even dimensional mirror quantum spheres. The $C^*$-algebra of such a quantum sphere is
defined as the pull-back of two copies of the
$2n$ dimensional quantum disc algebra $C(\DD^{2n}_q)$ over the obvious surjection to
the $2n-1$ dimension quantum sphere algebra $C(S^{2n-1}_q)$, with one copy of the
disc pre-composed with a `flip' automorphism. We show that they can be realised as
$C^*$-algebras associated to certain explicitly constructed $C^*$-correspondences.

In Section \ref{sec:labelledgraphs}, we first introduce a category of labelled graphs and
construct faithful representations of $C^*$-algebras associated to them. Then we show existence
of a functor from this category of labelled graphs to the category of $C^*$-correspondences
which preserves the associated $C^*$-algebras. Finally, we explicitly construct labelled
graphs giving rise to the $C^*$-algebras of the even dimensional mirror quantum spheres.

\section{Category of $C^*$-correspondences} \label{sec:correspondences}

In this section, we construct a category of $C^*$-correspondences in such a way that the
association of $C^*$-algebras to the correspondences becomes functorial.
These $C^*$-algebras first arose in the work of Pimsner in \cite{Pi}. However, in this original
definition the left action was required to be injective as non-injective left actions often
led to degenerate $C^*$-algebras. This definition included a large class of $C^*$-algebras,
for example crossed product $C^*$-algebras and Cuntz-Krieger algebras. However,
the above mentioned restriction did lead to the exclusion of many interesting $C^*$-algebras,
notably graph algebras where the underlying directed graph had sinks. This problem was
resolved by Katsura in \cite{Ka1}, and his definition reduces to that of Pimsner when
the left action is injective. This larger class of $C^*$-algebras now contains examples such as
crossed products by partial automorphisms, graph algebras associated to graphs with sinks, and
as we shall show in this paper, the labelled graph algebras of Bates and Pask, \cite{BaPa}.

We begin this section by recalling the definition of a $C^*$-correspondence,
e.g. see \cite{La,Ka1}.

For a $C^*$-algebra $A$, a \emph{right Hilbert $A$-module} is a Banach space $X$ equipped
with a right action of $A$, and an $A$-valued inner-product satisfying
\begin{enumerate}
 \item $\langle \xi,\eta a \rangle = \langle \xi,\eta\rangle  a$;
 \item $\langle \eta,\xi\rangle  = \langle \xi,\eta\rangle ^*$; and
 \item $\langle \xi,\xi\rangle  \geq 0$ and $\|\xi\| = \sqrt{\|\langle \xi,\xi\rangle \|}$
\end{enumerate}
for all $\xi, \eta \in X$ and $a \in A$.

We denote by $\Ll(X)$ the set of all \emph{adjointable} operators on $X$; that is,
linear operators $T:X\to X$ such that there exists a linear operator $T^*$ called
the \emph{adjoint} of $T$ such that
\[
 \langle T \xi,\eta\rangle  = \langle \xi, T^* \eta\rangle
\]
for all $\xi, \eta\ \in X$. If the adjoint $T^*$ exists, it is unique.
With the usual operator norm $\|T\|=\sup\{\|Tx\|:\|x\|\leq 1\}$, $\Ll(X)$ is a $C^*$-algebra

For $\xi,\eta\in X$, define $\theta_{\xi,\eta}$ to be the operator satisfying
\[
 \theta_{\xi,\eta}(\zeta) = \xi\langle \eta,\zeta\rangle .
\]

This is an adjointable operator with $(\theta_{\xi,\eta})^* = \theta_{\eta,\xi}$. We call
\[
 \Kk(X) = \clsp\{\theta_{\xi,\eta}:\xi,\eta\in X\}
\]
the set of \emph{compact} operators. It is a closed two-sided ideal in $\Ll(X)$.

\begin{defn}
We say that a right Hilbert $A$-module $X$ is a \emph{$C^*$-correspondence over $A$}
when a $*$-homomorphism $\phi_X:A\to\Ll(X)$ is given. We call $\phi_X$ the
\emph{left action} of the $C^*$-correspondence.
\end{defn}

\begin{exm} \label{exm:algebracorrespondence}
Let $D$ be a $C^*$-algebra. Then there is a natural way to realise $(D,D)$
as a $C^*$-correspondence by defining the left and right actions by multiplication
and inner-product $\langle a, b\rangle = a^*b$ for $a,b\in D$. With a $C^*$-correspondence defined in this manner, there is a canonical isomorphism
\[
 \iota_D:\Kk(D) \to D
\]
satisfying $\iota_D(\theta_{a,b}) = ab^*$.
\end{exm}

For $C^*$-correspondences $(X,A)$ and $(Y,B)$ and a continuous linear map $\psi_X:X\to Y$
a $*$-homomorphism $\psi_X^+:\Kk(X) \to \Kk(Y)$ is defined satisfying
\[
 \psi_X^+(\theta_{\xi,\eta}) = \theta_{\psi_X(\xi),\psi_X(\eta)}.
\]
To see that this map is well-defined we refer the reader to \cite{Pi,KaPiWa} and
\cite[Proposition 3.10]{BaGu2}. If $(Z,C)$ is another $C^*$-correspondences and
$\psi_Y:Y\to Z$ is a continuous map, then we have the following composition law:
$ \psi_Y^+\circ\psi_X^+ = (\psi_Y\circ\psi_X)^+$.

Now we aim to define the category of $C^*$-correspondences and
introduce a covariant functor $F$ from the category of $C^*$-correspondences to the
category of $C^*$-algebras, satisfying $F(X,A)=\Oo_X$ where $\Oo_X$ is the $C^*$-algebra
associated to $(X,A)$ in the sense of Katsura \cite{Ka1}. To this end,
we first define after Katsura (\cite{Ka1}) an ideal $J_X$ of $A$ by
\[
 J_X := \{a\in A:\phi_X(a)\in\Kk(X) \mbox{ and } ab=0 \mbox{ for all } b\in \ker(\phi_X)\}.
\]
Note that $J_X=\phi_X^{-1}(\Kk(X))$ whenever $\phi_X$ is injective.

\begin{defn}\label{defn:morf}
Let $(X,A)$ and $(Y,B)$ be $C^*$-correspondences. Let $\psi_X:X\to Y$ be a linear map
and $\psi_A:A\to B$ be a $C^*$-homomorphism. We say that the pair $(\psi_X,\psi_A)$
is a \emph{morphism of $C^*$-correspondences} if the following conditions hold.
\begin{enumerate}
 \item[(C1)] \label{C1} $\langle \psi_X(\xi),\psi_X(\eta)\rangle =\psi_A(\langle \xi,\eta\rangle )$
 for all $\xi,\eta\in X$,
 \item[(C2)] \label{C2} $\psi_X(\phi_X(a)\xi) = \phi_Y(\psi_A(a))\psi_X(\xi)$ for all $\xi\in X$
 and $a\in A$,
 \item[(C3)] \label{C3} $\psi_A(J_X) \subset J_Y$, and
 \item[(C4)] \label{C4} $\phi_Y(\psi_A(a)) = \psi_X^+(\phi_X(a))$ for all $a\in J_X$.
\end{enumerate}
\end{defn}

\begin{prop}
Let $(\psi_X,\psi_A):(X,A)\rightarrow(Y,B)$ be a morphism of $C^*$-correspondences. Then
$\psi_X(\xi a) = \psi_X(\xi)\psi_A(a)$ for all $a\in A$. Furthermore, if the
image $\psi_X(X) \subset Y$ is dense, then condition (C4) automatically follows from
(C1)--(C3).
\end{prop}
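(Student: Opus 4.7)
The plan is to prove the two claims in turn, treating the right-module compatibility first and then using it to deduce (C4) under the density hypothesis.

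For the first assertion, the strategy is to form the difference $\zeta := \psi_X(\xi a) - \psi_X(\xi)\psi_A(a)$ and verify that $\langle \zeta,\zeta\rangle = 0$ using only (C1) and the axioms of a Hilbert module. Expanding the inner product yields four terms; each of them reduces, via (C1) together with $\langle \xi a, \eta\rangle = a^*\langle\xi,\eta\rangle$ and $\langle \xi,\eta a\rangle = \langle\xi,\eta\rangle a$, to the single expression $\psi_A(a)^*\psi_A(\langle\xi,\xi\rangle)\psi_A(a)$, and the signs ensure cancellation. Since the norm on $Y$ satisfies $\|\zeta\|^2 = \|\langle\zeta,\zeta\rangle\|$, we conclude $\zeta = 0$, giving the claimed right $A$-linearity (with $A$ acting on $Y$ via $\psi_A$).

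For the second assertion, fix $a \in J_X$, so $\phi_X(a) \in \Kk(X)$. Since $\Kk(X) = \clsp\{\theta_{\xi,\eta}\}$ and $\psi_X^+$ is a continuous $*$-homomorphism, it suffices to verify the equality $\phi_Y(\psi_A(a))\,y = \psi_X^+(\phi_X(a))\,y$ on a dense subset of $Y$; by hypothesis we may take $y = \psi_X(\zeta)$ for $\zeta \in X$. For a rank-one kernel $\theta_{\xi,\eta}$, I compute
\[
\psi_X^+(\theta_{\xi,\eta})\psi_X(\zeta) = \psi_X(\xi)\langle\psi_X(\eta),\psi_X(\zeta)\rangle = \psi_X(\xi)\psi_A(\langle\eta,\zeta\rangle),
\]
using (C1), and then applying the first part of the proposition rewrites this as $\psi_X(\xi\langle\eta,\zeta\rangle) = \psi_X(\theta_{\xi,\eta}(\zeta))$. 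Approximating $\phi_X(a)$ in norm by finite sums of rank-one compacts and using continuity of $\psi_X$ and of $\psi_X^+$, this gives
\[
\psi_X^+(\phi_X(a))\psi_X(\zeta) = \psi_X(\phi_X(a)\zeta),
\]
which by (C2) equals $\phi_Y(\psi_A(a))\psi_X(\zeta)$. Hence the two adjointable operators agree on the dense set $\psi_X(X)$, and therefore on all of $Y$.

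I do not anticipate a serious obstacle: the whole proposition is a bookkeeping exercise in the axioms (C1) and (C2), together with the standard fact that adjointable operators are determined by their action on a dense subspace. The only subtle point is that the proof of part 2 genuinely requires part 1, because shifting $\psi_A(\langle\eta,\zeta\rangle)$ back inside $\psi_X$ is precisely what the right-module compatibility allows; this dictates the order in which the two claims must be proved.
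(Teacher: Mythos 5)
Your proof is correct, and for the second (density) claim it follows exactly the paper's argument: compute $\psi_X^+(\theta_{\xi,\eta})\psi_X(\zeta)=\psi_X(\theta_{\xi,\eta}\zeta)$ on rank-one operators, extend by continuity to all of $\Kk(X)$, invoke (C2) for $a\in J_X$, and conclude by density of $\psi_X(X)$. The only divergence is in the first claim, where the paper simply cites Baki\'c--Gulja\v{s} \cite[Theorem 2.3]{BaGu3}, whereas you supply a self-contained proof via the standard computation showing $\langle \zeta,\zeta\rangle=0$ for $\zeta=\psi_X(\xi a)-\psi_X(\xi)\psi_A(a)$; this computation is valid (each of the four terms reduces to $\psi_A(a)^*\psi_A(\langle\xi,\xi\rangle)\psi_A(a)$ using (C1) and the Hilbert-module axioms) and makes the proposition independent of the external reference.
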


\begin{proof}
The first part follows from \cite[Theorem 2.3]{BaGu3}. For the second property, suppose
the image $\psi_X(X) \subset Y$ is dense. It follows from the first
property and (C1) that for any $\theta_{\xi,\eta} \in \Kk(X)$ and $\psi_X(\zeta) \in \psi_X(X)$ we have
\[
 \psi_X^+(\theta_{\xi,\eta})\psi_X(\zeta) = \psi_X(\xi)\langle\psi_X(\eta),\psi_X(\zeta)\rangle =
 \psi_X(\theta_{\xi,\eta} \zeta).
\]
Therefore $\psi_X^+(K)\psi_X(\zeta) = \psi_X(K \zeta)$ for all $K\in\Kk(X)$. In particular,
for all $a\in J_X$ we have
$$ \psi^+_X(\phi_X(a))\psi_X(\zeta)=\psi_X(\phi_X(a)\zeta)=\phi_Y(\psi_A(a))\psi_X(\zeta) $$
by (C2), since $\phi_X(a)\in\Kk(X)$. By density of $\psi_X(X) \subset Y$, we may now conclude that
$\psi^+_X(\phi_X(a) = \phi_Y(\psi_A(a))$.
\end{proof}

The following simple example illustrates the fact that (C4) may fail if the image
of $\psi_X$ is not dense.

\begin{exm}
Let $X$ be the one dimensional Hilbert space with generator $e$, and $Y$ be
the two dimensional Hilbert space with generators $f_1$ and $f_2$. Let $A = B = \CC$.
Then by defining left and right actions as multiplication, $(X,A)$ and $(Y,B)$
are $C^*$-correspondences. Define a pair of maps $(\psi_X,\psi_A):(X,A) \to (Y,B)$ by
$\psi_X(e) = f_1$ and $\psi_A = \id$. Clearly, the image of $\psi_X$ is not dense in $Y$.
It is easily shown that conditions (C1), (C2) and (C3) hold. But (C4) says that we should have
\begin{eqnarray*}
 & & \psi_X^+(\phi_X(1)) = \phi_Y(\psi_A(1)) \\
 &\iff& \psi_X^+(\theta_{e,e}) = \phi_Y(1) \\
 &\iff& \theta_{f_1,f_1} = \theta_{f_1,f_1} + \theta_{f_2,f_2}
\end{eqnarray*}
which would imply that the generator $f_2$ is zero. So (C4) does not hold.
\end{exm}

We can now construct the desired category, which we call $\Cc$. The objects are given by
\[
 \Obj(\Cc) = \{(X,A):X \mbox{ is a $C^*$-correspondence over $A$}\}
\]
and morphisms $\Mor(\Cc)$ as in Definition \ref{defn:morf}.

There are well defined domain and codomain maps $\Mor(\Cc)\to \Obj(\Cc)$, namely for
$(\psi_X,\psi_A):(X,A)\to(Y,B)$, we have $\dom(\psi_X,\psi_A)=(X,A)$ and
$\cod(\psi_X,\psi_A)=(Y,B)$. It is clear from the definition of morphisms that the
composition of two composable morphisms will result in another morphism. For an object
$(X,A)$ the identity morphism is $\id_{(X,A)}=(\id_X,\id_A)$ which is also clearly a
morphism of $C^*$-correspondences.

\par The next step is to define the $C^*$-algebra associated to a $C^*$-correspondence,
and show that this process is naturally implemented by a functor between the categories.
In order to do this, we first need to define what we mean by a covariant representation of
a $C^*$-correspondence on a $C^*$-algebra. We do this via morphisms of $C^*$-correspondences.

\begin{defn} \label{defn:representations}
Let $(X,A)$ be a $C^*$-correspondence and let $D$ be an arbitrary $C^*$-algebra.
A \emph{covariant representation} of $(X,A)$ on $D$ is a morphism $(\rho_X,\rho_A):(X,A)
\to (D,D)$, where $(D,D)$ is the $C^*$-correspondence introduced in Example
\ref{exm:algebracorrespondence}.
\end{defn}

\begin{rmk} \label{rmk:representations}
It is easy to see that this definition is equivalent to that of a covariant representation, given in \cite{Ka1}. If we consider morphisms that only satisfy conditions (C1) and (C2), then we recover the original definition of a representation of a $C^*$-correspondence. Furthermore, it follows from the  somorphism $\Kk(D) \cong D$ that $J_D = D$, and hence condition (C3) is automatic for morphisms of this form. Condition (C4) is the covariance condition of \cite{Ka1}; i.e. it says that
\[
 \rho_X^+(\phi_X(a)) = \iota_D^{-1}(a)
\]
where $\iota_D:\Kk(D) \to D$ is the isomorphism from example \ref{exm:algebracorrespondence}.
\end{rmk}

\begin{defn} \cite[Definition 2.6]{Ka1}
Define the $\Oo_X$ to be the universal $C^*$-algebra generated by the image of $(X,A)$
under the universal covariant representation $(\pi_X,\pi_A)$.
\end{defn}

It is easy to show the existence of such a universal representation, which in fact 
is always injective.

Now, we want to construct a functor $F$ from our category $\Cc$ to the category of
$C^*$-algebras, satisfying $F(X,A)=\Oo_X$. In order to do this, we must first
see that any morphism $(\psi_X,\psi_A):(X,A)\to (Y,B)$ extends to a unique $C^*$-homomorphism,
which we denote by $\Psi:\Oo_X\to\Oo_Y$.

\begin{prop} \label{prop:morphismextension} Let $(\psi_X,\psi_A):(X,A)\to (Y,B)$ be a
morphism of $C^*$-correspondences. Then this morphism extends to a unique $C^*$-homomorphism
$\Psi:\Oo_X\to\Oo_Y$ such that the following diagram commutes.
\[
 \beginpicture
  \setcoordinatesystem units <2cm,2cm>
  \put{$(X,A)$} at 0 1.5
  \put{$(Y,B)$} at 2 1.5
  \put{$\Oo_X$} at 0 0
  \put{$\Oo_Y$} at 2 0
  \arrow <0.15cm> [0.25,0.75] from 0 1.3 to 0 0.2
  \arrow <0.15cm> [0.25,0.75] from 0.4 1.5  to 1.6 1.5
  \arrow <0.15cm> [0.25,0.75] from 2 1.3 to 2 0.2
  \arrow <0.15cm> [0.25,0.75] from 0.2 0 to 1.8 0
  \put{$\Psi$} at 1 0.2
  \put{$(\pi_X,\pi_A)$} at -0.4 0.75
  \put{$(\psi_X,\psi_A)$} at 1 1.7
  \put{$(\pi_Y,\pi_B)$} at 2.4 0.75
 \endpicture
\]
\end{prop}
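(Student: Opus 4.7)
The plan is to exploit the universal property of $\Oo_X$ by constructing a covariant representation of $(X,A)$ on the $C^*$-algebra $\Oo_Y$, obtained by composing the given morphism with the universal covariant representation $(\pi_Y,\pi_B)$ of $(Y,B)$. The sought-after $\Psi$ will then arise automatically, and uniqueness will follow from the fact that $\pi_X(X)\cup\pi_A(A)$ generates $\Oo_X$.

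First I would define $\rho_X:=\pi_Y\circ\psi_X: X\to\Oo_Y$ and $\rho_A:=\pi_B\circ\psi_A: A\to\Oo_Y$, and verify that $(\rho_X,\rho_A):(X,A)\to(\Oo_Y,\Oo_Y)$ is a morphism in the sense of Definition \ref{defn:morf}; by Definition \ref{defn:representations} and Remark \ref{rmk:representations}, this is precisely a covariant representation. Conditions (C1) and (C2) are immediate two-step chases: apply the corresponding condition for $(\psi_X,\psi_A)$, then the corresponding condition for $(\pi_Y,\pi_B)$. Condition (C3) is automatic, since $J_{\Oo_Y}=\Oo_Y$ as noted in Remark \ref{rmk:representations}.

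The one step that actually uses the whole definition of a morphism is (C4). For $a\in J_X$, condition (C3) for $(\psi_X,\psi_A)$ gives $\psi_A(a)\in J_Y$, and then (C4) for $(\psi_X,\psi_A)$ yields $\phi_Y(\psi_A(a))=\psi_X^+(\phi_X(a))$. Applying $\pi_Y^+$ and invoking the composition law $\pi_Y^+\circ\psi_X^+=(\pi_Y\circ\psi_X)^+=\rho_X^+$ recorded before Definition \ref{defn:morf}, I obtain $\rho_X^+(\phi_X(a))=\pi_Y^+(\phi_Y(\psi_A(a)))$. Now (C4) for $(\pi_Y,\pi_B)$ (expressed in the covariance form of Remark \ref{rmk:representations}) identifies the right-hand side with $\iota_{\Oo_Y}^{-1}(\pi_B(\psi_A(a)))=\iota_{\Oo_Y}^{-1}(\rho_A(a))$, which is exactly what (C4) for $(\rho_X,\rho_A)$ demands.

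Once $(\rho_X,\rho_A)$ is seen to be a covariant representation of $(X,A)$ on $\Oo_Y$, the universal property of $\Oo_X$ furnishes a unique $*$-homomorphism $\Psi:\Oo_X\to\Oo_Y$ with $\Psi\circ\pi_X=\rho_X=\pi_Y\circ\psi_X$ and $\Psi\circ\pi_A=\rho_A=\pi_B\circ\psi_A$, which is precisely the commutativity of the square. Uniqueness of $\Psi$ with this property is automatic from generation of $\Oo_X$ by $\pi_X(X)\cup\pi_A(A)$. I do not anticipate a genuine obstacle; the only delicate point is the verification of (C4), which tidily combines (C3) and (C4) for $(\psi_X,\psi_A)$ with the composition law for the $+$-construction and covariance of $(\pi_Y,\pi_B)$.
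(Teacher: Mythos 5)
Your proposal is correct and follows exactly the paper's route: the paper likewise observes that $(\pi_Y\circ\psi_X,\pi_B\circ\psi_A)$ is a covariant representation of $(X,A)$ on $\Oo_Y$ (this is just closure of morphisms under composition) and invokes the universal property of $\Oo_X$. You merely spell out the verification of (C1)--(C4), in particular the (C4) step via the composition law for the $+$-construction, which the paper leaves implicit.
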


\begin{proof}
It follows from the definition that $(\pi_Y \circ \psi_X, \pi_B \circ \psi_A)$ is a
covariant representation of $(X,A)$ on $\Oo_Y$. Hence the universal property of $\Oo_X$
implies the existence of the required $*$-homomorphism $\Psi:\Oo_X\to\Oo_Y$ such that the
diagram commutes.
\end{proof}

\begin{defn}
Define a map $F$ from the category $\Cc$ of $C^*$-correspondences to the category
of $C^*$-algebras which satisfies
\[
 F(X,A) = \Oo_X
\]
and for a morphism $(\psi_X,\psi_A):(X,A)\to(Y,B)$ we let $F(\psi_X,\psi_A)$ be the
unique map $\Psi:\Oo_X\to\Oo_Y$ satisfying the conditions of Proposition \ref{prop:morphismextension}.
\end{defn}

\begin{prop}
The map $F$ is a covariant functor from the category of $C^*$-correspon-dences to
the category of $C^*$-algebras.
\end{prop}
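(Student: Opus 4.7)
The plan is to verify the two defining properties of a covariant functor, namely preservation of identity morphisms and preservation of composition. Both will follow directly from the uniqueness clause in Proposition~\ref{prop:morphismextension}.

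First, I would handle identities. For an object $(X,A)$, the morphism $(\id_X,\id_A)$ is indeed a morphism of $C^*$-correspondences (conditions (C1)--(C4) are trivial). The map $\id_{\Oo_X}:\Oo_X\to\Oo_X$ satisfies $\id_{\Oo_X}\circ(\pi_X,\pi_A)=(\pi_X,\pi_A)=(\pi_X,\pi_A)\circ(\id_X,\id_A)$, so $\id_{\Oo_X}$ makes the diagram of Proposition~\ref{prop:morphismextension} commute for the morphism $(\id_X,\id_A)$. By the uniqueness of the extension, $F(\id_{(X,A)})=\id_{\Oo_X}$.

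Next, I would verify functoriality on compositions. Given composable morphisms $(\psi_X,\psi_A):(X,A)\to(Y,B)$ and $(\psi_Y,\psi_B):(Y,B)\to(Z,C)$, write $\Psi=F(\psi_X,\psi_A)$ and $\Psi'=F(\psi_Y,\psi_B)$. The composite $(\psi_Y\circ\psi_X,\psi_B\circ\psi_A)$ is a morphism of $C^*$-correspondences; this is a short check that each of (C1)--(C4) is preserved under composition, the only slightly less automatic case being (C4), which follows from the composition law $\psi_Y^+\circ\psi_X^+=(\psi_Y\circ\psi_X)^+$ recalled before Definition~\ref{defn:morf}. Chasing the two squares provided by Proposition~\ref{prop:morphismextension} shows that
\[
 (\Psi'\circ\Psi)\circ(\pi_X,\pi_A)=\Psi'\circ(\pi_Y,\pi_B)\circ(\psi_X,\psi_A)=(\pi_Z,\pi_C)\circ(\psi_Y\circ\psi_X,\psi_B\circ\psi_A),
\]
so $\Psi'\circ\Psi$ is an extension of the composite morphism. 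By uniqueness of such an extension, $\Psi'\circ\Psi=F((\psi_Y,\psi_B)\circ(\psi_X,\psi_A))$.

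There is no real obstacle here: the entire content of functoriality has been bundled into the universal property used to define $F$ on morphisms in Proposition~\ref{prop:morphismextension}, and the only point that requires any thought is confirming that the composite of two morphisms of $C^*$-correspondences again satisfies all four axioms (C1)--(C4). Covariance is clear from the construction, as $F$ preserves the direction of arrows.
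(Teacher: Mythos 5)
Your proof is correct and follows essentially the same route as the paper: both arguments reduce functoriality to the commutative diagram of Proposition~\ref{prop:morphismextension}, the only cosmetic difference being that you invoke the uniqueness clause of that proposition while the paper checks equality directly on the generators $\pi_X(\xi)$, $\pi_A(a)$ (which is how that uniqueness is established anyway). Your explicit verification that the composite of two morphisms satisfies (C4) via the law $\psi_Y^+\circ\psi_X^+=(\psi_Y\circ\psi_X)^+$ is a point the paper relegates to an earlier unproved remark, so it is a welcome addition rather than a deviation.
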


\begin{proof}
First fix $C^*$-correspondences $(X,A), (Y,B)$ and $(Z,C)$, and morphisms of
$C^*$-correspondences $(\psi_X,\psi_A):(X,A)\to(Y,B)$ and $(\omega_Y,\omega_B):(Y,B)\to(Z,C)$.
Since $\id_X$ is linear, $\id_A$ is a homomorphism and $\id_A(J_X)=J_X$, we know that
$(\id_X,\id_A)$ is a morphism of $C^*$-correspondences. So in order to see that $F$ is
a covariant functor, we need to show that
\begin{enumerate}
 \item \label{id} $\id_{F(X,A)} = F(\id_{(X,A)})$; and
 \item \label{comp} $F(\psi_X,\psi_A)\circ F(\omega_Y,\omega_B) =
 F((\psi_X,\psi_A)\circ(\omega_Y,\omega_B))$.
\end{enumerate}
For (\ref{id}), we need only show that this holds on the generators
$\{\pi_X(\xi):\xi\in X\}$ and $\{\pi_A(a):a\in A\}$ and this follows from the
commutativity of the diagram in Proposition \ref{prop:morphismextension}.

For (\ref{comp}), again we only see that it holds on generators, so fix $\xi\in X$
and $a\in A$. Then by three applications of the commutativity of the diagram
in Proposition \ref{prop:morphismextension} and the fact that the composition
$(\psi_X,\psi_A)\circ(\omega_Y,\omega_B)$ is a morphism of $C^*$-correspondences we have
\begin{eqnarray*}
 F(\omega_Y,\omega_B)\circ F(\psi_X,\psi_A)(\pi_X(\xi)) &=&
 F(\omega_Y,\omega_B)\circ \pi_Y((\psi_X,\psi_A)(\xi)) \\
 &=& \pi_Z((\omega_Y,\omega_B)\circ(\psi_X,\psi_A)(\xi)) \\
 &=& F((\omega_Y,\omega_B)\circ(\psi_X,\psi_A))(\pi_X(\xi))
\end{eqnarray*}
and similarly we can show that
\[
 F(\omega_Y,\omega_B)\circ F(\psi_X,\psi_A)(\pi_X(a)) =
 F((\omega_Y,\omega_B)\circ(\psi_X,\psi_A))(\pi_X(a))
\]
as required. So $F$ is a covariant functor from the category of $C^*$-correspondences
to the category of $C^*$-algebras.
\end{proof}

In what follows we will just write $\Oo_X$ for $F(X,A)$ and use capitalised Greek
characters for induced homomorphisms between $C^*$-algebras.

\section{Gluing $C^*$-correspondences} \label{sec:gluing}

The purpose of this section is to show that the functor $F$ constructed above is
well-behaved with respect to taking pullbacks. This will be useful in applications when
we consider noncommutative spaces as being constructed by gluing two underlying spaces
together over a common boundary. The idea of taking pullbacks on the level of
$C^*$-correspondences motivates the following definition, which is due to Baki\'{c}
and Gulja\v{s}, \cite{BaGu2}.

\begin{defn} \label{defn:pullback}
Given $(X,A),(Y,B)$ and $(Z,C) \in \Cc$, and morphisms of $C^*$-correspon-dences
$(\psi_X,\psi_A):(X,A)\to(Z,C)$, $(\omega_Y,\omega_B):(Y,B)\to(Z,C)$, define
the \emph{restricted direct sum}
\[
 X \dsz Y:= \{(\xi,\eta)\in X \oplus Y:\psi_X(\xi)=\omega_Y(\eta)\}.
\]
\end{defn}

\begin{prop} \label{prop:restricteddirectsum}
The restricted direct sum $X\dsz Y$ is a $C^*$-correspondence over the 
$C^*$-algebra $A\dsc B$ defined to be the pullback $C^*$-algebra of $A$ and 
$B$ along $\psi_A$ and $\omega_B$.
\end{prop}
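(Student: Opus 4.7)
The plan is to verify each piece of $C^*$-correspondence structure on $X \dsz Y$ in turn, with the guiding principle that well-definedness (i.e.\ staying inside the pullback) is where conditions (C1)--(C2) and the first part of the previous proposition get used. First I would note that $A \dsc B = \{(a,b) \in A \oplus B : \psi_A(a) = \omega_B(b)\}$ is a $C^*$-algebra as the standard pullback, and that $X \dsz Y$ is a closed subspace of $X \oplus Y$ (continuity of $\psi_X$ and $\omega_Y$), hence a Banach space with the direct sum norm $\|(\xi,\eta)\| = \max(\|\xi\|,\|\eta\|)$.

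Next I would define the right action coordinatewise, $(\xi,\eta)(a,b) := (\xi a, \eta b)$, and the inner product coordinatewise, $\langle (\xi_1,\eta_1),(\xi_2,\eta_2) \rangle := (\langle \xi_1,\xi_2 \rangle, \langle \eta_1,\eta_2 \rangle)$. For the right action to land in $X \dsz Y$, one computes using the first part of the previous proposition that
\[
 \psi_X(\xi a) = \psi_X(\xi)\psi_A(a) = \omega_Y(\eta)\omega_B(b) = \omega_Y(\eta b),
\]
where the middle equality uses $\psi_X(\xi) = \omega_Y(\eta)$ and $\psi_A(a) = \omega_B(b)$. For the inner product to land in $A \dsc B$, condition (C1) gives
\[
 \psi_A(\langle \xi_1,\xi_2 \rangle) = \langle \psi_X(\xi_1),\psi_X(\xi_2) \rangle = \langle \omega_Y(\eta_1),\omega_Y(\eta_2) \rangle = \omega_B(\langle \eta_1,\eta_2 \rangle).
\]
The three Hilbert module axioms are then inherited coordinatewise, and the norm identity follows because the pullback $C^*$-algebra norm is the max of the two component norms, which matches $\max(\|\langle \xi,\xi \rangle\|,\|\langle \eta,\eta \rangle\|) = \max(\|\xi\|^2,\|\eta\|^2)$.

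Finally, for the left action I would define $\phi_{X \dsz Y}(a,b)(\xi,\eta) := (\phi_X(a)\xi, \phi_Y(b)\eta)$. To see this lands in $X \dsz Y$, condition (C2) applied to both morphisms gives
\[
 \psi_X(\phi_X(a)\xi) = \phi_Z(\psi_A(a))\psi_X(\xi) = \phi_Z(\omega_B(b))\omega_Y(\eta) = \omega_Y(\phi_Y(b)\eta),
\]
again using the pullback constraints on $(a,b)$ and $(\xi,\eta)$. Adjointability of $\phi_{X\dsz Y}(a,b)$ is coordinatewise, and $\phi_{X\dsz Y}$ is a $*$-homomorphism because $\phi_X$ and $\phi_Y$ are.

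The proof is essentially routine bookkeeping, but the one step that needs real care is the consistent use of the compatibility conditions: conditions (C1) and (C2) for both morphisms, together with $\psi_X(\xi a)=\psi_X(\xi)\psi_A(a)$ from the previous proposition, are exactly what is needed for every operation to preserve the pullback constraint. The main obstacle is nothing deep but rather making sure that these three ingredients are invoked in the right place for the right action, the inner product, and the left action respectively; once that is done, all $C^*$-correspondence axioms follow from their counterparts in $(X,A)$ and $(Y,B)$.
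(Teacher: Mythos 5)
Your proof is correct, and it is organised around the same coordinatewise definitions as the paper's, but the two arguments diverge in how the work is distributed. The paper outsources the entire Hilbert-module structure of $X\dsz Y$ over $A\dsc B$ to \cite[Lemma 2.1]{BaGu2} and only verifies the left action by hand; you instead check the right action, the inner product, and the norm identity directly, using $\psi_X(\xi a)=\psi_X(\xi)\psi_A(a)$ and (C1) to see that each operation preserves the pullback constraints. This makes your write-up self-contained at the cost of some routine verification. More substantively, for the key point --- that $\phi_{X\dsz Y}(a,b)(\xi,\eta)$ again lies in $X\dsz Y$ --- the paper composes with the universal covariant representation of $(Z,C)$ on $\Oo_Z$ and invokes the injectivity of that representation to deduce $\psi_X(\phi_X(a)\xi)=\omega_Y(\phi_Y(b)\eta)$, whereas you apply (C2) to both morphisms directly:
\[
 \psi_X(\phi_X(a)\xi)=\phi_Z(\psi_A(a))\psi_X(\xi)=\phi_Z(\omega_B(b))\omega_Y(\eta)=\omega_Y(\phi_Y(b)\eta).
\]
Your route is shorter and arguably cleaner, since the detour through $\Oo_Z$ adds nothing here; the conclusion is purely a statement about the correspondences and follows from (C2) without any representation theory. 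Both arguments are valid; yours buys directness, the paper's buys brevity by citation.
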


\begin{proof}
It follows from \cite[Lemma 2.1]{BaGu2} that $X\dsz Y$ is a Hilbert $A\dsc B$-module. 
In order to prove that it is also a $C^*$-correspondence we need a left action 
$\phi_{X\dsz Y}$. For $(\xi,\eta)\in X\dsz Y$ and $(a,b)\in A\dsc B$ we define
\[
 \phi_{X\dsz Y}(a,b)(\xi,\eta) := (\phi_X(a)\xi,\phi_Y(b)\eta).
\]
To see that this is an element of $X\dsz Y$, first recall that 
$(\pi_Z\circ\psi_A,t_Z\circ\psi_X)$ is a covariant representation of 
$(X,A)$ on $\Oo_Z$ and $(\pi_Z\circ\omega_Y,\pi_C\circ\omega_B)$ is a covariant 
representation of $(Y,B)$ on $\Oo_Z$. Then
\begin{eqnarray*}
 \pi_C\circ\psi_A(\phi_X(a)\xi) &=& (\pi_C\circ\psi_A)(a)(\pi_Z\circ\psi_X)(\xi) \\
 &=& (\pi_C\circ\omega_B)(b)(\pi_Z\circ\omega_Y)(\eta) \\
 &=& \pi_C(\omega_B(\phi_Y(b)\eta))
\end{eqnarray*}
and then since $\pi_Z$ is injective, it follows that $\psi_A(\phi_X(a)\xi)=\omega_B(\phi_Y(b)\eta)$. It is clear from the defintion that it is a $*$-homomorphism so $\phi_{X\dsz Y}:A \dsc B \to \Ll(X\dsz Y)$ is a left-action and $(X\dsc Y,A\dsc B)$ is a $C^*$-correspondence.
\end{proof}

The following theorem is the main result of this section.

\begin{theorem} \label{thm:pullback} Let $(X,A),(Y,B)$ and $(Z,C)$ be $C^*$-correspondences. 
Fix morphisms of $C^*$-correspondences $(\psi_X,\psi_A):(X,A)\to(Z,C)$, 
$(\omega_Y,\omega_B):(Y,B)\to(Z,C)$ satisfying
\begin{enumerate}
\item $(\psi_X,\psi_A)$ and $(\omega_Y,\omega_B)$ are surjective morphisms with $\psi_A(\ker(\phi_X)) = \omega_B(\ker(\phi_Y))$,
\item $\phi_X(A) \subset \Kk(X)$ and $\phi_Y(B) \subset \Kk(Y)$, and
\item the ideals $\ker(\phi_X)$ and $\ker(\phi_Y)$ are complemented; i.e. there exist ideals $J_A \unlhd A$ and $J_B \unlhd B$ such that $A = J_A \oplus \ker(\phi_X)$ and $B = J_B \oplus \ker(\phi_Y)$.
\end{enumerate}
Then
\[
 \Oo_{X\dsz Y} \cong \Oo_X \oplus_{\Oo_Z} \Oo_Y
\]
where $\Oo_X \oplus_{\Oo_Z} \Oo_Y$ is the pullback $C^*$-algebra of $\Oo_X$ and $\Oo_Y$ along $\Psi$ and $\Omega$.
\end{theorem}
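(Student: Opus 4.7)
The plan is to construct the isomorphism $\Phi:\Oo_{X\dsz Y}\to\Oo_X\oplus_{\Oo_Z}\Oo_Y$ via the universal property of the pullback and then establish bijectivity. First I would verify that the coordinate projections $p_X:X\dsz Y\to X$ and $p_Y:X\dsz Y\to Y$, paired with the algebra projections $p_A:A\dsc B\to A$ and $p_B:A\dsc B\to B$, are morphisms of $C^*$-correspondences in the sense of Definition \ref{defn:morf}. Conditions (C1) and (C2) are immediate from the coordinatewise structure of $X\dsz Y$ and $\phi_{X\dsz Y}$. For (C3), hypothesis (1) lets me lift any $a'\in\ker(\phi_X)$ to a pair $(a',b')\in\ker(\phi_{X\dsz Y})\cap(A\dsc B)$, which forces $p_A(J_{X\dsz Y})\subset J_X$; and (C4) is automatic by the preceding proposition, since surjectivity of $\psi_X$ and $\omega_Y$ makes $p_X$ surjective. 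Applying the functor $F$ produces $\Pi_X:\Oo_{X\dsz Y}\to\Oo_X$ and $\Pi_Y:\Oo_{X\dsz Y}\to\Oo_Y$; functoriality applied to the common composite $(X\dsz Y,A\dsc B)\to(Z,C)$ gives $\Psi\circ\Pi_X=\Omega\circ\Pi_Y$, so the universal property of the pullback delivers $\Phi$, characterized by $\pi_{X\dsz Y}(\xi,\eta)\mapsto(\pi_X(\xi),\pi_Y(\eta))$ and $\pi_{A\dsc B}(a,b)\mapsto(\pi_A(a),\pi_B(b))$.

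For surjectivity of $\Phi$ I would use the short exact sequence $0\to\{0\}\times\ker(\Omega)\to\Oo_X\oplus_{\Oo_Z}\Oo_Y\to\Oo_X\to 0$, which exists because $\Omega$ is surjective (this in turn follows from surjectivity of $\omega_Y$ and $\omega_B$, which transport through $F$). The composition of $\Phi$ with projection to $\Oo_X$ is $\Pi_X$, which is surjective because $p_X$ and $p_A$ already are. It remains to hit $\{0\}\times\ker(\Omega)$; generators of $\ker(\Omega)$ of the form $\pi_Y(\eta)$ with $\eta\in\ker\omega_Y$ and $\pi_B(b)$ with $b\in\ker\omega_B$ are images under $\Phi$ of $\pi_{X\dsz Y}(0,\eta)$ and $\pi_{A\dsc B}(0,b)$, both of which lie in $\ker(\Pi_X)$. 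That $\ker(\Omega)$ is genuinely generated as an ideal by these elements is a standard consequence of hypotheses (2) and (3), which guarantee that the kernel of $F$ on a surjective correspondence morphism is the ideal associated to an explicitly identifiable sub-correspondence.

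Injectivity is the main obstacle, and I would attack it through Katsura's gauge-invariant uniqueness theorem. Since $F$ carries the natural $\TT$-grading on correspondences to the gauge action on the Cuntz--Pimsner algebras, both $\Psi$ and $\Omega$ are gauge-equivariant; hence the pullback inherits a coordinatewise $\TT$-action and $\Phi$ is equivariant. The restriction $\Phi|_{A\dsc B}$ is simply $\pi_A\oplus\pi_B$ on pairs, which is injective because $\pi_A$ and $\pi_B$ are. The technical heart of the proof, and where hypotheses (2) and (3) do their real work, is the verification of Katsura's admissibility condition. Condition (2) ensures $\phi_{X\dsz Y}(A\dsc B)\subset\Kk(X\dsz Y)$, so that the Katsura ideal $J_{X\dsz Y}$ is determined by kernel data alone, while condition (3) supplies the direct-sum decomposition of $A\dsc B$ that identifies $J_{X\dsz Y}$ explicitly as the natural restricted direct sum $J_A\dsc J_B$ sitting inside $A\dsc B$. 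Once $J_{X\dsz Y}$ is pinned down this way, the standard fixed-point-subalgebra reduction for Cuntz--Pimsner algebras converts injectivity of $\Phi$ into the already-established injectivity of $\Phi|_{A\dsc B}$. The overwhelming majority of the technical work therefore concentrates on this identification of $J_{X\dsz Y}$ and the parallel check that the covariance condition transports cleanly between the two sides; the rest of the proof is bookkeeping against the machinery already set up in Section \ref{sec:correspondences}.
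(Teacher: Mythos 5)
Your construction is, at bottom, the same as the paper's: the map you obtain from the coordinate projections and the universal property of the pullback is exactly the canonical one sending $\pi_{X\dsz Y}(\xi,\eta)\mapsto(\pi_X(\xi),\pi_Y(\eta))$ and $\pi_{A\dsc B}(a,b)\mapsto(\pi_A(a),\pi_B(b))$; injectivity is settled by the gauge-invariant uniqueness theorem (Theorem \ref{thm:gaugeinvariantuniquenesstheorem}) exactly as in the paper, using injectivity on $A\dsc B$ and gauge equivariance; and surjectivity is reduced, as in the paper, to the claim that $\ker(\Omega)$ is the ideal generated by the canonical images of $\ker(\omega_B)$ (and $\ker(\omega_Y)$). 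The genuine gap is that this last claim is not ``a standard consequence of hypotheses (2) and (3)'' that can be waved through: it is precisely the paper's Lemma \ref{lemma:kernelideal}, and it is the technical heart of the theorem. The paper proves it by first showing that $\ker(\omega_B)$ is $Y$-invariant and $Y$-saturated in the sense of Muhly--Tomforde (this is where surjectivity of $\omega_Y$ and injectivity of $\phi_Z$ on $J_Z$ are used), noting that $\ker(\Omega)$ is gauge invariant, and then invoking \cite[Theorem 6.4]{MuTo}; hypotheses (2) and (3) are consumed in making that machinery applicable, not in the injectivity step. Your one-sentence justification (``the kernel of $F$ on a surjective correspondence morphism is the ideal associated to an explicitly identifiable sub-correspondence'') names no mechanism that delivers this, so as written the surjectivity argument rests on an unproved lemma.

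Two further points in your injectivity paragraph are unsubstantiated and, fortunately, unnecessary. You assert that condition (2) gives $\phi_{X\dsz Y}(A\dsc B)\subset\Kk(X\dsz Y)$ and that condition (3) identifies $J_{X\dsz Y}$ as $J_A\dsc J_B$; neither is clear, since the available comparison of compacts goes the other way, namely the embedding $\Kk(X\dsz Y)\subset\Kk(X)\oplus\Kk(Y)$, $\theta_{(\xi,\eta),(\xi',\eta')}\mapsto(\theta_{\xi,\xi'},\theta_{\eta,\eta'})$, and all that the argument actually requires is the implication $(a,b)\in J_{X\dsz Y}\Rightarrow a\in J_X$ and $b\in J_Y$, which follows from that embedding together with hypothesis (1) (lifting elements of $\ker(\phi_X)$ to $\ker(\phi_{X\dsz Y})$), exactly as you use it when checking (C3) for the coordinate projections. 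Once that implication and the coordinatewise gauge action are in hand, Theorem \ref{thm:gaugeinvariantuniquenesstheorem} gives injectivity with no need to compute $J_{X\dsz Y}$ exactly or to redo the fixed-point-algebra reduction; so you should delete those claims (or prove them) and instead spend the effort on the kernel lemma, which is where the real work of hypotheses (2) and (3) lies.
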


We begin by showing that there is a covariant representation of the restricted direct sum correspondence $(X \dsz Y, A \dsc B)$ on the pullback $C^*$-algebra $\Oo_X \oplus_{\Oo_Z} \Oo_Y$. First, we need a definition.

\begin{defn}
Let $(\rho_X,\rho_A)$ be a covariant representation of a $C^*$-correspondence $(X,A)$ on a $C^*$-algebra $D$. Then we say that $(\rho_X,\rho_A)$ \emph{admits a gauge action} if for $z\in\TT$ there exists a $*$-homomorphism $\alpha_z:C^*(\rho_X,\rho_A)\to C^*(\rho_X,\rho_A)$ such that $\alpha_z(\rho_A(a))=\rho_A(a)$ and $\alpha_z(\rho_X(\xi))=z \, \rho_X(\xi)$ for all $a\in A$ and $\xi\in X$. We say an ideal $I\subset C^*(\rho_X,\rho_A)$ is \emph{gauge invariant} if $\alpha_z(I)\subset I$ for all $z\in\TT$.
\end{defn}

It is well-known that the universal covariant representation $(\pi_X,\pi_A)$ of $(X,A)$ admits a gauge action.

\begin{lemma} \label{lemma:kernelideal}
Let $(\psi_X,\psi_A)$ be a morphism of $C^*$-correspondences, with associated $C^*$-homomorphism $\Psi:\Oo_X \to \Oo_Y$. Then $\ker(\Psi)$ is a gauge invariant ideal of $\Oo_X$. Furthermore, if we assume that $(\psi_X,\psi_A)$ is surjective, $\phi_X(A) \subset \Kk(X)$ and $\ker(\phi_X)$ is complemented then $\ker(\Psi)$ is the ideal generated by $\pi_X(\ker(\psi_A))$.
\end{lemma}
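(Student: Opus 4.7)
The first assertion follows from gauge-equivariance of $\Psi$. The gauge actions $\alpha^X_z$ on $\Oo_X$ and $\alpha^Y_z$ on $\Oo_Y$ fix the images of $\pi_A$ and $\pi_B$ while rotating those of $\pi_X$ and $\pi_Y$ by $z$; using the relations $\Psi\circ\pi_X=\pi_Y\circ\psi_X$ and $\Psi\circ\pi_A=\pi_B\circ\psi_A$ from Proposition \ref{prop:morphismextension}, one sees on generators that $\alpha^Y_z\circ\Psi=\Psi\circ\alpha^X_z$. Consequently if $x\in\ker\Psi$ then $\Psi(\alpha^X_z(x))=\alpha^Y_z(\Psi(x))=0$, so $\alpha^X_z(\ker\Psi)\subset\ker\Psi$ and $\ker\Psi$ is gauge-invariant.

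For the second assertion, let $I$ denote the ideal of $\Oo_X$ generated by $\pi_A(\ker\psi_A)$ (the $\pi_X$ in the statement is a typographical slip, since $\ker\psi_A\subset A$). Since $\Psi\circ\pi_A$ vanishes on $\ker\psi_A$, we have $I\subset\ker\Psi$, and $\Psi$ descends to a surjection $\bar\Psi\colon\Oo_X/I\to\Oo_Y$. My plan for the reverse inclusion is to construct an inverse to $\bar\Psi$ by producing a covariant representation of $(Y,B)$ on $\Oo_X/I$ and invoking the universal property of $\Oo_Y$.

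Writing $q\colon\Oo_X\to\Oo_X/I$ for the quotient, define $\tilde\pi_B(\psi_A(a)):=q(\pi_A(a))$ and $\tilde\pi_X(\psi_X(\xi)):=q(\pi_X(\xi))$; surjectivity of $(\psi_X,\psi_A)$ makes these defined on all of $B$ and $Y$. Well-definedness of $\tilde\pi_X$ uses the $C^*$-identity in the quotient: if $\psi_X(\xi)=0$ then $\langle\xi,\xi\rangle\in\ker\psi_A$ by (C1), hence $\pi_X(\xi)^*\pi_X(\xi)=\pi_A(\langle\xi,\xi\rangle)\in I$, and therefore $\pi_X(\xi)\in I$. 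Conditions (C1), (C2), and (C3) of Definition \ref{defn:morf} for the candidate $(\tilde\pi_X,\tilde\pi_B)$ are routine consequences of the corresponding axioms for $(\pi_X,\pi_A)$ and $(\psi_X,\psi_A)$; (C3) is automatic since $J_D=D$ when $D$ acts on itself as in Example \ref{exm:algebracorrespondence}.

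The main technical step is covariance (C4), and this is where the hypotheses $\phi_X(A)\subset\Kk(X)$ and complementability of $\ker\phi_X$ are used. Let $J_A$ be an ideal complementing $\ker\phi_X$ in $A$; the description of $J_X$ together with $J_A\cdot\ker\phi_X\subset J_A\cap\ker\phi_X=0$ gives $J_X=J_A$. Evaluating on $\psi_X(\xi)$ and using surjectivity of $\psi_X$ promotes the identity $\phi_Y(\psi_A(a))=\psi_X^+(\phi_X(a))$ from $a\in J_X$ (where it holds by (C4) of the morphism) to all $a\in A$. Given $b\in J_Y$ and any lift $a\in A$ with $\psi_A(a)=b$, decompose $a=a_1+a_2$ along $A=J_A\oplus\ker\phi_X$, so $\phi_X(a)=\phi_X(a_1)$ with $a_1\in J_X$. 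The composition law $\tilde\pi_X^+\circ\psi_X^+=(q\circ\pi_X)^+$ combined with covariance $\iota_{\Oo_X}(\pi_X^+(\phi_X(a_1)))=\pi_A(a_1)$ of the universal representation yields, upon applying $\iota_{\Oo_X/I}$,
\[
\iota_{\Oo_X/I}\bigl(\tilde\pi_X^+(\phi_Y(b))\bigr)=q(\pi_A(a_1))=\tilde\pi_B(b)-q(\pi_A(a_2)).
\]
The residual $q(\pi_A(a_2))$ vanishes: on the one hand $\psi_A(a_2)\in\psi_A(\ker\phi_X)\subset\ker\phi_Y$, and on the other hand $\psi_A(a_2)=b-\psi_A(a_1)\in J_Y$ by (C3), and $J_Y\cap\ker\phi_Y=0$ is immediate from the definition of $J_Y$; hence $\psi_A(a_2)=0$, so $a_2\in\ker\psi_A$ and $\pi_A(a_2)\in I$. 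This establishes (C4); the universal property of $\Oo_Y$ then furnishes a $*$-homomorphism $\Lambda\colon\Oo_Y\to\Oo_X/I$ extending $(\tilde\pi_X,\tilde\pi_B)$, and a check on generators shows $\Lambda\circ\bar\Psi=\id$ and $\bar\Psi\circ\Lambda=\id$, so $\bar\Psi$ is an isomorphism and $\ker\Psi=I$.
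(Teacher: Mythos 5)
Your proof is correct, but it takes a genuinely different route from the paper. The paper handles the second assertion by showing that $\ker(\psi_A)$ is $X$-invariant and $X$-saturated (using $\ker(\psi_X)=X\ker(\psi_A)$ via Katsura's description of $XI$, plus surjectivity and injectivity of $\phi_Y$ on $J_Y$) and then quotes Muhly--Tomforde's structural theorem on such ideals to identify $\ker(\Psi)$ with the ideal generated by $\pi_A(\ker\psi_A)$; you instead argue directly, passing to the quotient $\Oo_X/I$, building a covariant representation $(\tilde\pi_X,\tilde\pi_B)$ of $(Y,B)$ there, and invoking the universal property of $\Oo_Y$ to produce an inverse of $\bar\Psi$. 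Your covariance verification is where the hypotheses do their work: $\phi_X(A)\subset\Kk(X)$ and surjectivity promote the identity $\phi_Y(\psi_A(a))=\psi_X^+(\phi_X(a))$ to all of $A$ (which, note, is also what justifies your inclusion $\psi_A(\ker\phi_X)\subset\ker\phi_Y$), and the complemented kernel supplies the decomposition $a=a_1+a_2$ with $a_1\in J_A\subset J_X$; incidentally you assert $J_X=J_A$ but only the inclusion $J_A\subset J_X$, which you do prove, is needed. What each approach buys: the paper's argument is shorter modulo the cited machinery and leans on the gauge-invariance of $\ker(\Psi)$ established in the first part, whereas yours is self-contained, makes the role of each hypothesis explicit, does not use gauge-invariance or the gauge-invariant ideal theory at all for the second assertion, and yields the slightly stronger explicit conclusion $\Oo_X/I\cong\Oo_Y$. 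Your reading of $\pi_X(\ker\psi_A)$ as the typo it is (it should be $\pi_A(\ker\psi_A)$, consistent with the paper's own proof) is right, and the small glossed steps (the identity $\iota_{\Oo_X/I}\circ(q\circ\pi_X)^+=q\circ\iota_{\Oo_X}\circ\pi_X^+$ on generators, continuity of the induced map $\tilde\pi_X$ on $Y$) are routine.
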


\begin{proof}
Let $\gamma : \TT \to \Aut(\Oo_X)$ be the gauge action. To see that $\ker(\Psi)$ is a gauge invariant ideal it is enought to show that $\Psi$ commutes with the automorphism $\gamma_z$ for any $z \in \TT$. This is easily seen to hold the generators $\{t_X(\xi):\xi \in X\}$ and $\{\pi_X(a):a \in A\}$.

Now suppose the morphism satisfies the extra hypotheses. We begin by showing that the ideal $\ker(\psi_A)$ is $X$-invariant and $X$-saturated in the sense of \cite{MuTo}. That is, we need to show that
\begin{enumerate}
\item $\phi_X(\ker(\psi_A)) X \subset X \ker(\psi_A)$, and
\item $a \in J_X$ and  $\phi_X(a) X \subset X\ker(\psi_A) \Longrightarrow a \in \ker(\psi_A)$.
\end{enumerate}
For (1), suppose $\xi \in X \ker(\psi_A)$. Then \cite[Proposition 1.3]{Ka2} implies that $\langle \xi,\xi \rangle \in \ker(\psi_A)$, so we have $\psi_X(\xi) = 0$ by definition of a morphism. Likewise, if $\xi \in \ker(\psi_X)$, then $\langle\xi,\xi\rangle \in \ker(\psi_A)$ so \cite[Proposition 1.3]{Ka2} implies $\xi \in X \ker(\psi_A)$. So we have $\ker(\psi_X) = X\ker(\psi_A)$ and condition (1) easily follows.

For condition (2), fix $a \in J_X$ and suppose $\phi_X(a) X \subset X \ker(\psi_A)$. From the argument above this is equivalent to $\psi_X(\phi_X(a) X) = \{0\}$. Since $\psi_X$ is surjective, this means we must have $\phi_Y(\psi_A(a)) = 0$ and hence $\psi_A(a) = 0$ since $\psi_A(a) \in J_Y$ and $\phi_Y$ is injective on $J_Y$. So $a \in \ker(\psi_A)$.

Finally, \cite[Theorem 6.4]{MuTo} implies that $\ker(\Psi)$ is the ideal generated by $\pi_X(\pi_X^{-1}(\ker(\Psi))) = \pi_X(\ker(\psi_A))$ as required.
\end{proof}

\begin{defn}
Define a pair of maps $(\rho_{X\dsz Y},\rho_{A\dsc B}):(X \dsz Y, A\dsc B) \to \Oo_X \oplus_{\Oo_Z} \Oo_Y$ as follows. The map $\rho_{A\dsc B}$ satisfies
\[
 \rho_{A\dsc B}(a,b) = (\pi_A(a),\pi_B(b))
\]
and $\rho_{X\dsz Y}$ satisfies
\[
 \rho_{X\dsz Y}(\xi,\eta) = (\pi_X(\xi),\pi_Y(\eta)).
\]
\end{defn}

\begin{prop} The pair $(\rho_{X\dsz Y},\rho_{A\dsc B})$ is an injective covariant
representation of the restricted direct sum $(X\dsz Y,A\dsc B)$ on the pullback
$C^*$-algebra $\Oo_X \oplus_{\Oo_Z} \Oo_Y$. Furthermore, this representation admits a gauge action.
\end{prop}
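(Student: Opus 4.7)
The plan is to verify conditions (C1)--(C4) of Definition~\ref{defn:morf} for the pair $(\rho_{X\dsz Y},\rho_{A\dsc B})$ regarded as a morphism from $(X\dsz Y,A\dsc B)$ into the self-correspondence $(\Oo_X\oplus_{\Oo_Z}\Oo_Y,\Oo_X\oplus_{\Oo_Z}\Oo_Y)$, then exhibit a gauge action and deduce injectivity.

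First I would confirm that the two maps land in the pullback: for $(a,b)\in A\dsc B$, the commuting squares of Proposition~\ref{prop:morphismextension} give $\Psi(\pi_A(a))=\pi_C(\psi_A(a))=\pi_C(\omega_B(b))=\Omega(\pi_B(b))$, so $\rho_{A\dsc B}(a,b)\in\Oo_X\oplus_{\Oo_Z}\Oo_Y$, and the same argument handles $\rho_{X\dsz Y}(\xi,\eta)$. Conditions (C1) and (C2) then reduce componentwise to the corresponding properties of the universal covariant representations $(\pi_X,\pi_A)$ and $(\pi_Y,\pi_B)$, while (C3) is automatic because $J_D=D$ for any self-correspondence $(D,D)$ (Remark~\ref{rmk:representations}).

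The main obstacle is the covariance condition (C4). I expect the hypotheses of Theorem~\ref{thm:pullback} to play a decisive role here: hypothesis (2) combined with the complementarity of (3) forces $J_X=J_A$ and $J_Y=J_B$, while the kernel matching in (1) yields the identification $J_{X\dsz Y}=J_A\oplus_C J_B$ together with $\phi_{X\dsz Y}(A\dsc B)\subset\Kk(X\dsz Y)$. Given $(a,b)\in J_{X\dsz Y}$, I would approximate $\phi_X(a)$ and $\phi_Y(b)$ by finite sums of rank-one operators, use surjectivity of $\psi_X$ and $\omega_Y$ together with the kernel matching to lift these to compatible approximants $\theta_{(\xi_i,\eta_i),(\xi'_i,\eta'_i)}\in\Kk(X\dsz Y)$ converging to $\phi_{X\dsz Y}(a,b)$, and then apply $\rho_{X\dsz Y}^+$ componentwise. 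The universal covariance identities $\pi_A(a)=\pi_X^+(\phi_X(a))$ and $\pi_B(b)=\pi_Y^+(\phi_Y(b))$ then yield $\rho_{X\dsz Y}^+(\phi_{X\dsz Y}(a,b))=(\pi_A(a),\pi_B(b))=\rho_{A\dsc B}(a,b)$, which is precisely (C4) for the self-correspondence on the pullback algebra.

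For the gauge action, Lemma~\ref{lemma:kernelideal} shows that $\Psi$ and $\Omega$ intertwine the gauge actions $\gamma^X$ and $\gamma^Y$ on $\Oo_X$ and $\Oo_Y$, so $\alpha_z:=(\gamma^X_z,\gamma^Y_z)$ restricts to an automorphism of $\Oo_X\oplus_{\Oo_Z}\Oo_Y$ for each $z\in\TT$; by construction it fixes $\rho_{A\dsc B}(a,b)$ and scales $\rho_{X\dsz Y}(\xi,\eta)$ by $z$, so it restricts further to the required gauge action on $C^*(\rho_{X\dsz Y},\rho_{A\dsc B})$. Finally, injectivity of both $\rho_{A\dsc B}$ and $\rho_{X\dsz Y}$ is immediate: if either vanishes on an element, then each of its components is sent to zero by the corresponding injective universal representation, so the element itself is zero.
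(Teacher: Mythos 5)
Your proposal is correct in outline and its skeleton matches the paper's: (C1)--(C2) are checked componentwise, (C3) is automatic by Remark~\ref{rmk:representations}, (C4) is reduced to showing that $(a,b)\in J_{X\dsz Y}$ forces $a\in J_X$ and $b\in J_Y$ (after which covariance of the universal representations $\pi_X,\pi_Y$ finishes the computation), the gauge action is assembled from the componentwise gauge actions, and injectivity comes from injectivity of the universal representations. Where you diverge is in how you obtain the key containment for (C4). The paper observes that there is an inclusion $\Kk(X\dsz Y)\subset\Kk(X)\oplus\Kk(Y)$ sending $\theta_{(\xi,\eta),(\xi',\eta')}\mapsto(\theta_{\xi,\xi'},\theta_{\eta,\eta'})$, which immediately gives $\phi_X(a)\in\Kk(X)$ and $\phi_Y(b)\in\Kk(Y)$, and then uses the kernel-matching hypothesis to pair each $c\in\ker(\phi_X)$ with a $d\in\ker(\phi_Y)$ so that $(c,d)\in\ker(\phi_{X\dsz Y})$ and hence $ac=0$. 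You instead invoke hypothesis (2) of Theorem~\ref{thm:pullback} for compactness and identify $J_X=J_A$, $J_Y=J_B$ via the complementation hypothesis; that is equally valid and arguably more economical. Note, however, that only the inclusion $J_{X\dsz Y}\subset J_X\times J_Y$ is needed; the reverse inclusion in your claimed equality $J_{X\dsz Y}=J_A\oplus_C J_B$, and in particular the assertion $\phi_{X\dsz Y}(A\dsc B)\subset\Kk(X\dsz Y)$, is not used here and would itself require justification.

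One step is oriented the wrong way: for (C4) you propose to approximate $\phi_X(a)$ and $\phi_Y(b)$ separately by finite rank-one sums and then \emph{lift} these to compatible elements $\theta_{(\xi_i,\eta_i),(\xi_i',\eta_i')}$ of $\Kk(X\dsz Y)$. Such a lift of independently chosen approximants need not exist, so as written this is a gap. It is also unnecessary: since $(a,b)\in J_{X\dsz Y}$, the operator $\phi_{X\dsz Y}(a,b)$ already lies in $\Kk(X\dsz Y)$ by definition, hence is a limit of sums $\sum_i\theta_{(\xi_i,\eta_i),(\xi_i',\eta_i')}$ whose components converge to $\phi_X(a)$ and $\phi_Y(b)$ (this is exactly what the paper's inclusion of compacts records); applying $\rho_{X\dsz Y}^+$ to these and using covariance of $\pi_X$ on $J_X$ and of $\pi_Y$ on $J_Y$ yields $\rho_{A\dsc B}(a,b)$. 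With that reversal your argument closes; the gauge-action and injectivity parts are sound as stated.
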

\begin{proof}
It is easy to see from the definition that $\rho_{A\dsc B}$ is a $*$-homomorphism 
and that $\rho_{X\dsz Y}$ is a linear map. Furthermore, routine calculations show that 
conditions (C1) and (C2) are satisfied. We know $\rho_{A\dsc B}$ is injective because 
the universal homomorphisms $\pi_X$ and $\pi_Y$ are injective. Remark \ref{rmk:representations}  
implies that (C3) is automatic in this case. For (C4), since $\pi_X$ and $\pi_Y$ are covariant representations, it is enough to show that $(a,b) \in J_{X \dsz Y}$ implies $a \in J_X$ and $b \in J_Y$. First notice that we have an inclusion $\Kk(X \dsz Y) \subset \Kk(X) \oplus \Kk(Y)$ given on generators by
\[
 \theta_{(\xi,\eta),(\xi',\eta')} \mapsto (\theta_{\xi,\xi'}, \theta_{\eta,\eta'}).
\]
So $(a,b) \in J_{X \dsz Y}$ implies $\phi_X(a) \in \Kk(X)$ and $\phi_Y(b) \in \Kk(Y)$. Now, fix $c \in \ker(\phi_X)$. Then since $\psi_A(\ker(\phi_X)) = \{0\} = \omega_B(\ker(\phi_Y))$ we can find an element $d \in \ker(\phi_Y)$ so that $(c,d) \in \ker(\phi_{X \dsz Y})$. Hence
\[
 (a,b) (c,d) = 0 \Longrightarrow a c = 0
\]
and we see that $a \in J_X$. Likewise, we may show that $b \in J_Y$ and we have an injective covariant representation as required.

Finally, it is easy to show that $(\rho_{X\dsz Y},\rho_{A\dsc B})$ admits a gauge action 
simply because both of the universal representations $\pi_X$ and $\pi_Y$ admit gauge actions.
\end{proof}

So we have an injective covariant representation of $(X\dsz Y,A\dsc B)$ on the pullback 
$C^*$-algebra $\Oo_X \oplus_{\Oo_Z} \Oo_Y$ so in particular, the universality of $\Oo_{X\dsz Y}$ 
implies that there exists a homomorphism $P:\Oo_{X\dsz Y}\to\Oo_X \oplus_{\Oo_Z} \Oo_Y$. We 
want to see that this is an isomorphism. To do this, we use the gauge-invariant uniqueness 
theorem, originally proved for the injective left-action case in \cite{FoMuRa} and then in the general case by Katsura, \cite[Theorem 6.4]{Ka3}. It is restated without proof here for the readers convenience.

\begin{theorem} \cite[Theorem 6.4]{Ka3} \label{thm:gaugeinvariantuniquenesstheorem}
For a covariant representation $(\rho_X,\rho_A)$ of a $C^*$-correspondence $(X,A)$, the 
$*$-homomorphism $P:\Oo_X\to C^*(\rho_X,\rho_A)$ is an isomorphism if and only if 
$(\rho_X,\rho_A)$ is injective and admits a gauge action.
\end{theorem}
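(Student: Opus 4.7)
The forward direction is routine: if $P$ is an isomorphism, then $\rho_A = P \circ \pi_A$ and $\rho_X = P \circ \pi_X$ inherit injectivity from the universal representation, and the gauge action $\gamma$ on $\Oo_X$ transports to $C^*(\rho_X,\rho_A)$ via $\alpha_z := P \circ \gamma_z \circ P^{-1}$, which acts as required on the generators $\rho_A(a)$ and $\rho_X(\xi)$.

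For the nontrivial implication, surjectivity of $P$ is immediate from the definition of $C^*(\rho_X,\rho_A)$. The plan for injectivity is the standard averaging strategy adapted to Katsura's setting. First, integrate the two gauge actions to produce conditional expectations
$$E(x) = \int_\TT \gamma_z(x)\,dz, \qquad E_\rho(y) = \int_\TT \alpha_z(y)\,dz$$
onto the respective fixed-point subalgebras (the cores), observe that $E$ is faithful because $\gamma$ is a continuous action on a $C^*$-algebra, and verify on generators that $P \circ E = E_\rho \circ P$. Given any $x \in \ker P$, the element $x^*x$ lies in $\ker P$, so $E(x^*x) \in \ker P$; thus injectivity of $P$ on all of $\Oo_X$ reduces to injectivity of $P$ restricted to the core $\Oo_X^\TT$ combined with faithfulness of $E$.

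The final and main step is analysing the core. Filter $\Oo_X^\TT$ by the closed subspaces $B_n$ spanned by products of the form $\pi_X(\xi_1)\cdots\pi_X(\xi_n)\pi_A(a)\pi_X(\eta_n)^*\cdots\pi_X(\eta_1)^*$, show these are nested subalgebras with $\Oo_X^\TT = \overline{\bigcup_n B_n}$, and induct on $n$ to prove $P|_{B_n}$ is injective. The base case $B_0 = \pi_A(A)$ follows from injectivity of $\rho_A$. The inductive step is the chief obstacle: Katsura's covariance condition (C4) is imposed only on the ideal $J_X$ rather than on all of $A$ (as in Pimsner's original injective-left-action setting), so $B_n$ carries a delicate extension structure that interleaves a Toeplitz-type Fock piece with the compact-operator identifications forced by (C4). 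One must identify $B_n$ concretely as an iterated extension built from the ideals $J_X$ and the auxiliary $K_n$-algebras that Katsura constructs, and check that $P$ respects this decomposition; injectivity on each stratum then follows from injectivity of $\rho_A$ together with faithfulness of the corresponding quotient map. A density argument combined with the reduction above then yields injectivity of $P$ on $\Oo_X$, completing the proof.
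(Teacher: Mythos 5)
First, a point of reference: the paper does not prove this statement at all --- it is quoted from Katsura \cite[Theorem 6.4]{Ka3} and explicitly ``restated without proof,'' so there is no in-paper argument to compare against. Your outline is the standard one and is essentially Katsura's own: transport the gauge action through $P$ for the easy direction; for the hard direction, average over the universal gauge action to obtain a faithful conditional expectation $E$ onto the core $\Oo_X^\TT$, reduce injectivity of $P$ to injectivity on the core, filter the core by the nested subalgebras $B_n$, and induct on $n$. One minor technical slip: you define $E_\rho(y)=\int_\TT \alpha_z(y)\,dz$, but the hypothesis ``admits a gauge action'' (as stated in this paper and in \cite{Ka3}) does not assume $z\mapsto\alpha_z$ is point-norm continuous, so that integral need not exist. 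It is also unnecessary: for $x\in\ker P$ one has $P(E(x^*x))=\int_\TT P(\gamma_z(x^*x))\,dz=\int_\TT \alpha_z(P(x^*x))\,dz=0$, using only continuity of the universal action $\gamma$, which is all the reduction to the core requires.

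The substantive issue is that the inductive step --- injectivity of $P$ on each $B_n$ --- is precisely where the entire content of the theorem lives, and your proposal does not carry it out: it records that one ``must identify $B_n$ concretely as an iterated extension \dots and check that $P$ respects this decomposition.'' That is a statement of the problem, not a solution. Concretely, what is needed is the structure theory of the core from Sections 5--6 of \cite{Ka3}: $B_n$ is a sum $\pi_A(A)+\psi_1(\Kk(X))+\cdots+\psi_n(\Kk(X^{\otimes n}))$, and the induction closes only once one has the exact identification of the overlap $B_{n-1}\cap \psi_n(\Kk(X^{\otimes n}))$ in terms of the ideal $J_X$ (this is where the restriction of covariance to $J_X$, rather than all of $A$, enters), so that an element of $B_n$ killed by $P$ can be shown to vanish stratum by stratum using injectivity of $\rho_A$ and of $\rho_X^+$ on each $\Kk(X^{\otimes k})$. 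Without proving that intersection formula the induction does not go through. So: correct strategy, correct reductions down to the core, but the decisive lemma is named rather than proved, and as written the argument is incomplete.
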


We now have the required results to begin the proof of Theorem \ref{thm:pullback}.

\begin{proof}[Proof of Theorem \ref{thm:pullback}]
We have an injective, covariant representation $\rho = (\rho_{X\dsz Y},\rho_{A\dsc B})$ of $X\dsz Y$ on the pullback $C^*$-algebra $\Oo_X \oplus_{\Oo_Z} \Oo_Y$. So the universality of $\Oo_{X \dsz Y}$ induces a homomorphism $\phi : \Oo_{X \dsz Y} \to \Oo_X \oplus_{\Oo_Z} \Oo_Y$. Let $\alpha, \beta$ be the gauge actions on $\Oo_X$ and $\Oo_Y$ respectively. Then clearly we get a gauge action $\alpha \oplus \beta$ on the pullback $\Oo_X \oplus_{\Oo_Z} \Oo_Y$ that is compatible with $\rho$. So the gauge invariant uniqueness theorem implies that $\phi$ is in fact an injection. It only remains to be seen that this map is a surjection.

Fix $(x,y)\in \Oo_X\oplus_{\Oo_Z} \Oo_Y$. Since the $C^*$-correspondence morphisms
$(\psi_X,\psi_A)$ and $(\omega_Y,\omega_B)$ are surjective, we can find an element $(x,y')\in \Oo_{X\dsz Y}$ with $y'\in\Omega^{-1}(\Psi(x))$. By
definition of this element, we must have $\Omega(y)=\Omega(y')=\Psi(x)$ and hence
$y-y'\in \ker(\Omega)$. Hence, we can write $(x,y)$ as a sum
\[
 (x,y) = (x,y') + (0,w)
\]
with $(x,y')\in \Oo_{X\dsz Y}$ and $w=y-y'\in \ker(\Omega)$. So to see that $(x,y)\in \Oo_{X\dsz Y}$,
we need only show that $(0,w)\in \Oo_{X\dsz Y}$. It is enough to show that $\ker(\Omega)$ is the ideal generated by $\pi_Y(\ker(\omega_B))$, and this follows from Lemma \ref{lemma:kernelideal}.

So we have the required isomorphism.
\end{proof}

\section{Construction of $2n$-dimensional mirror quantum spheres} \label{sec:mirrorsphere}

As an application of Theorem \ref{thm:pullback}, in this section we present a realisation of the $2n$ dimensional mirror quantum sphere $C(S^{2n}_{q,\beta})$ where $q\in(0,1)$, as a $C^*$-algebra associated to a $C^*$-correspondence. For the original construction and results on this object we refer the reader to \cite{HoSz}. We recall the basic definition here for the convenience of the reader.

Firstly, fix $n\in\NN$. We construct the $2n$ dimensional quantum sphere by gluing two $2n$ dimensional quantum discs along their boundary, a $2n-1$ dimensional quantum sphere. The `mirroring' is obtained by pre-composing one of the discs with an automorphism that `flips' it prior to gluing. More concretely, we construct it as the pullback of the following diagram
\[
 \beginpicture
  \setcoordinatesystem units <2cm,2cm>
  \put{$C(\mathbb{D}_q^{2n})$} at -1 0
  \put{$C(\mathbb{D}_q^{2n})$} at 1 1
  \put{$C(S_q^{2n-1})$} at 1 0
  \put{$\pi$} at 0 0.1
  \put{$\beta\circ\pi$} at 1.3 .5
  \arrow <0.15cm> [0.25,0.75] from -0.65 0 to 0.55 0
  \arrow <0.15cm> [0.25,0.75] from 1 0.8 to 1 0.2
 \endpicture
\]
where $\pi:C(\mathbb{D}_q^{2n})\to C(S_q^{2n-1})$ is the natural surjection and $\beta\in\Aut (C(S_q^{2n-1}))$.

The first step is to represent each $C^*$-algebra as a $C^*$-algebra generated by a $C^*$-correspondence. We know from \cite{HoSz} that $C(\DD_q^{2n})$ is isomorphic to the graph algebra $C^*(M_n)$, where $M_n$ is the graph with $n+1$ vertices $M_n^0 = \{v_1,\dots,v_{n+1}\}$, edge set $M_n^1 = \{e_{i,j}:1\leq i \leq n, i\leq j \leq n+1\}$ and range and source maps $r$ and $s$ satisfying $s(e_{i,j}) = v_i \mbox{ and } r(e_{i,j}) = v_j$.

Now, consider the vector space $X=\opsp\{w_{i,j}:i=1,\dots,n, j=i,\dots,n+1\}$ and $C^*$-algebra generated by mutually orthogonal projections $A=\opsp\{P_i:i=1,\dots,n+1\}$. Define an $A$ valued inner-product on $X$ satisfying
\[
 \langle w_{i,j},w_{k,l}\rangle  = \delta_{i,k} \delta_{j,l} P_j
\]
and a right action of $A$ on $X$ satisfying
\[
 w_{i,j} P_k = \delta_{j,k} w_{i,j}.
\]
Define a left action $\phi_X:A\to\Ll(X)$ satisfying
\[
 \phi_X(P_k)w_{i,j} = \delta_{i,k} w_{i,j}.
\]
Then it is easily checked that $X \cong C_c(M_n^1)$ and $A \cong C_0(M_n^0)$ and the $C^*$-correspondence structure matches that defined in \cite[Section 3.4]{Ka1}, so \cite[Proposition 3.10]{Ka1} implies that we have an isomorphism $\Oo_{X} \cong C(\DD_q^{2n})$.

Similarly for $C(S_q^{2n})$, consider the vector space
\[
 Z=\opsp\{z_{i,j}:i=1,\dots,n,j=i,\dots,n\}
\]
and $C^*$-algebra generated by mutually orthogonal projections
\[
 C=\opsp\{S_i:i=1,\dots,n\} \cong \mathbb{C}^n.
\]
Then $\Oo_{Z} \cong C(S_q^{2n-1})$. We refer the reader to \cite{HoSz} for the details of the directed graph that we are using to construct $Z$ and $C$.

Now, we want to see that there exists a morphism of $C^*$-correspondences $(\psi_X,\psi_A):(X,A)\to (Z,C)$ such that the induced homomorphism $\Psi:\Oo_X \to \Oo_Z$ and the surjection $\pi:C(\DD_q^{2n})\to C(S_q^{2n})$ are intertwined by the isomorphism $\Oo_X \cong C(\DD_q^{2n})$ and the isomorphism $\Oo_Z \cong C(S_q^{2n})$. Define
\[
 \psi_X(w_{i,j}) = \left\{ \begin{array}{ll}
 z_{i,j} & \mbox{ for } 1\leq i \leq n \mbox{ and } i\leq j\leq n \\
 0 & \mbox{ if } i=n, j=n+1
 \end{array} \right.
\]
and
\[
 \psi_A(P_i) = \left\{ \begin{array}{c}
 S_i \ \mbox{ for } 1\leq i \leq n \\
 0 \ \mbox{ if } i=n+1
 \end{array} \right.
\]
Then it is routine to show that this is a morphism of $C^*$-correspondences and the induced map $\Psi:\Oo_X\to\Oo_Z$ is the desired homomorphism. So we have
\[ \begin{array}{l}
 \Psi(\pi_A(w_{i,j})) = \pi_Z(z_{i,j}) \mbox{ for } i=1,\dots,n,j=i,\dots,n \\
 \Psi(\pi_A(w_{n,n+1})) = 0 \\
 \Psi(\pi_A(P_i)) = \pi_C(S_i) \mbox{ for } i=1,\dots,n \\
 \Psi(\pi_A(P_{n+1})) = 0.
\end{array} \]

For the mirroring, we compose with the automorphism $\beta\in\Aut(\Oo_Z)$ satisfying
\[ \begin{array}{l}
 \beta(\pi_Z(z_{i,j})) = \pi_Z(z_{i,j}) \mbox{ for } i=1,\dots,n-1,j=i,\dots,n-1  \\
 \beta(\pi_Z(z_{n,n})) = \pi_Z(z_{n,n})^* \\
 \beta(\pi_C(S_i)) = \pi_C(S_i)
\end{array} \]

However, we are not yet ready to use Theorem \ref{thm:pullback} because the map $\beta\circ\Psi$ does not come from a $C^*$-correspondence morphism from $(X,A)$ to $(Z,C)$. So we must find another $C^*$-correspondence $(Y,B)$ and a $C^*$-correspondence morphism $(\omega_Y,\omega_B):(Y,B)\to(Z,C)$ such that $\Oo_Y\cong C(\mathbb{D}_q^{2n})$ and the extension $\Omega:\Oo_Y\to\Oo_Z$ of $(\omega_Y,\omega_B)$ is the $C^*$-homomorphism $\beta\circ\Psi$.

Let $Y_1:=\{x_{i,j}:1\leq i \leq n-1, i\leq j \leq n+1\}, Y_2:=\{x_{i,n,j}:1\leq i \leq n-1, j\geq 1\}$ and $Y_3:=\{y,y',y_i:i\geq 1\}$ be sets of generators and define a vector space
\[
 Y:=\clsp(Y_1\cup Y_2\cup Y_3).
\]
Let $B$ be the $C^*$-algebra generated by a set of $n+1$ mutually orthogonal non-zero projections $\{R_i:i=1,\dots,n+1\}$ and a set  $\{Q_j:j\geq 1\}$ of mutually orthogonal projections satisfying $Q_j\leq R_n$ for all $j\geq 1$.

Define a $B$ valued inner-product on $Y$ by
\[
 \begin{array}{ll}
  \langle x_{i,j},x_{k,l}\rangle  = \delta_{i,k}\delta_{j,l} R_j & \ \ \ \langle x_{i,j},x_{k,n,l}\rangle  = \delta_{i,k}\delta_{j,n} Q_l \\
  \langle x_{i,j},y\rangle  = 0 & \ \ \ \langle x_{i,j},y_k\rangle  = 0 \\
  \langle x_{i,n,j},y\rangle  = 0 & \ \ \ \langle x_{i,n,j},y_k\rangle  = 0 \\
  \langle x_{i,n,j},x_{k,n,l} \rangle = \delta_{i,k}\delta_{j,l} Q_j & \ \ \ \langle y,y\rangle  = R_n \\
  \langle y,y'\rangle  = Q_1 & \ \ \ \langle y,y_i\rangle  = Q_{i+1} \\
  \langle y',y'\rangle  = Q_1 & \ \ \ \langle y',y_i\rangle  = 0 \\
  \langle y_i,y_j\rangle  = \delta_{i,j} Q_{i+1}
 \end{array}
\]
and a right action of $B$ on $Y$ given by
\[
 \begin{array}{ll}
  x_{i,j} R_k = \delta_{j,k} x_{i,j} & \ \ \ x_{i,j} Q_k = \delta_{j,n} x_{i,n,k} \\
  x_{i,n,j} R_k = \delta_{n,k} x_{i,n,j} & \ \ \ x_{i,n,j} Q_k = \delta_{j,k} x_{i,n,j} \\
  y R_k = \delta_{n,k} y & \ \ \ y Q_k = \left\{ \begin{array}{l} y_{k-1} \mbox{ if } k\geq 2 \\ y' \mbox{ if } k=1 \\ 0 \mbox{ otherwise} \end{array} \right. \\
  y' R_k = \delta_{n,k} y' & \ \ \ y' Q_k = \delta_{1,k} y' \\
  y_i R_k = \delta_{n,k} y_i & \ \ \ y_i Q_k = \delta_{i+1,k} y_i.
 \end{array}
\]
We can define a left-action $\phi_Y:B\to\Ll(Y)$ by
\[
 \begin{array}{ll}
  \phi_Y(R_k)x_{i,j} = \delta_{i,k} x_{i,j} & \ \ \ \phi_Y(R_k)x_{i,n,j} = \delta_{i,k} x_{i,n,j} \\
  \phi_Y(R_k)y = \delta_{n,k} (y-y')+ \delta_{n+1,k} y' & \ \ \ \phi_Y(R_k)y' = \delta_{n+1,k} y' \\
  \phi_Y(R_k)y_i = \delta_{n,k} y_i & \ \ \ \phi_Y(Q_k)x_{i,j} = 0 \\
  \phi_Y(Q_k)x_{i,n,j} = 0 & \ \ \ \phi_Y(Q_k)y = y_k \\
  \phi_Y(Q_k)y' = 0 & \ \ \ \phi_Y(Q_k)y_i = \delta_{i,k}y_i.
 \end{array}
\]
Tedious, but straightforward calculations show that $(Y,B)$ is a $C^*$-correspondence.

\begin{theorem} \label{thm:XYisomorphism}
The $C^*$-algebras $\Oo_X$ and $\Oo_Y$ are isomorphic.
\end{theorem}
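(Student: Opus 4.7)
The plan is to exhibit an injective covariant representation $(\rho_Y,\rho_B)\colon (Y,B)\to\Oo_X$ that admits a gauge action on its image, and then apply the gauge-invariant uniqueness theorem (Theorem~\ref{thm:gaugeinvariantuniquenesstheorem}). Write $\mathbf{s}:=\pi_X(w_{n,n})$ and $\mathbf{t}:=\pi_X(w_{n,n+1})$; these satisfy $\mathbf{s}^*\mathbf{s}=\pi_A(P_n)$, $\mathbf{t}^*\mathbf{t}=\pi_A(P_{n+1})$, the Cuntz--Krieger relation $\mathbf{s}\mathbf{s}^*+\mathbf{t}\mathbf{t}^*=\pi_A(P_n)$, and the edge/vertex identities $\mathbf{s}^*\mathbf{t}=\mathbf{t}^*\mathbf{s}=\mathbf{s}\mathbf{t}^*=\mathbf{t}\mathbf{s}^*=0$. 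Define $\rho_B(R_i):=\pi_A(P_i)$, $\rho_B(Q_j):=\mathbf{s}^{j-1}\mathbf{t}\mathbf{t}^*\mathbf{s}^{*(j-1)}$, $\rho_Y(x_{i,j}):=\pi_X(w_{i,j})$, $\rho_Y(x_{i,n,j}):=\pi_X(w_{i,n})\rho_B(Q_j)$, $\rho_Y(y):=\mathbf{s}^*+\mathbf{t}^*$, $\rho_Y(y'):=\mathbf{t}^*$, and $\rho_Y(y_i):=\rho_B(Q_i)\mathbf{s}^*$.

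Conditions (C1) and (C2) follow from a case-by-case verification on generators, the centrepiece being $\rho_Y(y)^*\rho_Y(y)=\mathbf{s}\mathbf{s}^*+\mathbf{t}\mathbf{t}^*=\pi_A(P_n)=\rho_B(R_n)$ (cross-terms vanishing). For (C3) and (C4), I would first observe that $\phi_Y$ maps every generator of $B$ into $\Kk(Y)$: for $i\le n-1$ one has the finite sum $\phi_Y(R_i)=\sum_{j=i}^{n+1}\theta_{x_{i,j},x_{i,j}}$ (where the single summand $\theta_{x_{i,n},x_{i,n}}$ absorbs all of $\{x_{i,n,k}:k\ge 1\}$ via $x_{i,n}Q_k=x_{i,n,k}$), together with $\phi_Y(R_{n+1})=\theta_{y',y'}$, $\phi_Y(Q_k)=\theta_{y_k,y_k}$, and the less-obvious identity $\phi_Y(R_n)=\theta_{y,y}-\theta_{y',y'}$; since $\phi_Y$ is injective (evaluate on the $x_{\cdot,\cdot}$, $y$, and $y_k$ to recover the coefficients), $J_Y=B$, and the covariance identity $\rho_B(b)=\rho_Y^+(\phi_Y(b))$ is a direct computation on each generator, e.g.\ $\rho_B(R_n)=\rho_Y(y)\rho_Y(y)^*-\rho_Y(y')\rho_Y(y')^*=(\pi_A(P_n)+\pi_A(P_{n+1}))-\pi_A(P_{n+1})$.

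To invoke the gauge-invariant uniqueness theorem I need $\rho_B$ injective and a gauge action on $C^*(\rho_Y,\rho_B)$. The algebra $B$ is commutative with Gelfand spectrum the isolated points $\{p_1,\dots,p_{n-1},p_{n+1}\}\cup\{q_j:j\ge 1\}$ together with an accumulation point $p_n^0$ (the limit of the $q_j$); the isolated characters are manifestly realised by $\rho_B$, and the $p_n^0$ character is realised too because $\rho_B(R_n)-\sum_{j=1}^{m}\rho_B(Q_j)=\mathbf{s}^m\mathbf{s}^{*m}$ is nonzero for every $m$ (by injectivity of $\pi_X$). For the gauge action, define $\alpha_z\in\Aut(\Oo_X)$ on generators by $\alpha_z\circ\pi_A=\pi_A$, $\alpha_z(\pi_X(w_{i,j}))=z\,\pi_X(w_{i,j})$ for $i\le n-1$, and $\alpha_z(\mathbf{s})=\bar z\mathbf{s}$, $\alpha_z(\mathbf{t})=\bar z\mathbf{t}$; every defining relation of $\Oo_X$ is preserved (the non-trivial terms all carry total scaling $|z|^2=1$), so $\alpha_z$ extends to an automorphism, and a generator-wise check gives $\alpha_z|_{\rho_B(B)}=\mathrm{id}$ and $\alpha_z\circ\rho_Y=z\,\rho_Y$. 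The theorem now yields that the induced $\Phi\colon\Oo_Y\to C^*(\rho_Y,\rho_B)$ is an isomorphism; surjectivity $C^*(\rho_Y,\rho_B)=\Oo_X$ is immediate since $\mathbf{t}=\rho_Y(y')^*$ and $\mathbf{s}=\rho_Y(y)^*-\mathbf{t}$ bring the remaining generators of $\Oo_X$ into the image. The main obstacle is step~1, in particular the decomposition $\phi_Y(R_n)=\theta_{y,y}-\theta_{y',y'}$ and the identification $J_Y=B$, which are what make (C4) a tractable computation.
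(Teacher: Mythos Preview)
Your argument is correct, and the covariant representation $(\rho_Y,\rho_B)$ you write down coincides (after a short computation) with the one the paper uses: since $\mathbf{t}\mathbf{s}=\mathbf{t}^2=0$ and $\mathbf{s}P_{n+1}=0$, one has $(\mathbf{s}+\mathbf{t})^jP_{n+1}=\mathbf{s}^{j-1}\mathbf{t}$, so your $\rho_B(Q_j)$ equals the paper's $a_j$, and similarly your $\rho_Y(y_i)$ agrees with the paper's. Your expression $\phi_Y(R_n)=\theta_{y,y}-\theta_{y',y'}$ is equivalent to the paper's $\theta_{y-y',y-y'}$, and your sum $\sum_{j=i}^{n+1}\theta_{x_{i,j},x_{i,j}}$ for $\phi_Y(R_i)$ is in fact the correct one (the paper's Lemma~\ref{lemma:idealinB}(1) stops at $j=n$, which drops the $x_{i,n+1}$ term).

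Where the two proofs diverge is in how the isomorphism is established. The paper builds a second covariant representation $(\rho_X,\rho_A)\colon (X,A)\to\Oo_Y$, sending $w_{n,n}\mapsto\pi_Y(y)^*-\pi_Y(y')^*$ and $w_{n,n+1}\mapsto\pi_Y(y')^*$, and then checks that the two induced $*$-homomorphisms $\Pi_X\colon\Oo_X\to\Oo_Y$ and $\Pi_Y\colon\Oo_Y\to\Oo_X$ are mutual inverses on generators. You instead invoke Theorem~\ref{thm:gaugeinvariantuniquenesstheorem} for the single representation $(\rho_Y,\rho_B)$, which obliges you to manufacture a gauge action on $\Oo_X$ compatible with $\rho_Y$; since $\rho_Y(y)=\mathbf{s}^*+\mathbf{t}^*$ has the ``wrong'' degree for the canonical gauge action on $\Oo_X$, you twist it by conjugating the scaling on the $n$-th row of edges. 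This twist is legitimate (the defining relations of the graph correspondence are all degree-zero in each row separately), and together with your injectivity argument for $\rho_B$ via the Gelfand spectrum of $B$ it completes the proof. The paper's two-sided approach sidesteps both the twisted action and the spectrum analysis, at the cost of verifying a second covariant representation into the less transparent algebra $\Oo_Y$; your route trades that verification for the gauge-action construction, which is arguably cleaner here since $\Oo_X$ is a familiar graph algebra.
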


Before we can prove Theorem \ref{thm:XYisomorphism} we need some preliminary results.

\begin{lemma} \label{lemma:idealinA}
The ideal $J_X$ is generated by the projections $P_i$ for $1\leq i \leq n$. Furthermore, we have
\begin{enumerate}
 \item $\phi_X(P_i) = \sum_{j=i}^n \theta_{w_{i,j},w_{i,j}}$ for $1\leq i\leq n-1$; and
 \item $\phi_X(P_n) = \theta_{w_{n,n},w_{n,n}} + \theta_{w_{n,n+1},w_{n,n+1}}$.
\end{enumerate}
\end{lemma}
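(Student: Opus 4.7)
The plan is to exploit the fact that $A \cong \CC^{n+1}$ is commutative and that $\phi_X$ has a very transparent action on the basis $\{w_{i,j}\}$. First I would pin down $\ker\phi_X$. Since $\phi_X(P_k)w_{i,j} = \delta_{i,k}w_{i,j}$ and the first index $i$ in the basis only ranges over $\{1,\dots,n\}$, we have $\phi_X(P_k)=0$ exactly when $k=n+1$, while for $k\le n$ the operator $\phi_X(P_k)$ acts as the identity on the (nonzero) finite-dimensional subspace $\opsp\{w_{k,j}: j=k,\dots,n+1\}$ and as zero on its complement. Thus $\ker\phi_X=\CC P_{n+1}$, and by mutual orthogonality of the $P_k$ the annihilator of $\ker\phi_X$ in $A$ is precisely $\opsp\{P_1,\dots,P_n\}$.

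Next I would verify the rank-one decompositions (1) and (2) by comparing both sides on every basis element $w_{k,l}$. The generic rank-one computation
\[
\theta_{w_{i,j},w_{i,j}}(w_{k,l}) = w_{i,j}\langle w_{i,j},w_{k,l}\rangle = \delta_{i,k}\delta_{j,l}\,w_{i,j}P_l = \delta_{i,k}\delta_{j,l}\,w_{i,j}
\]
shows that summing $\theta_{w_{i,j},w_{i,j}}$ over all $j$ with $w_{i,j}\in X$ (that is, $j=i,\dots,n+1$) reproduces $\phi_X(P_i)$ on every basis vector; this is what (2) records for $i=n$, and the analogous range of $j$ is required for $i\le n-1$ so that the equality also holds on $w_{i,n+1}$.

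With the formulas in hand the two claims are immediate. Each $\phi_X(P_i)$ with $i\le n$ is a finite sum of rank-one operators, hence lies in $\Kk(X)$; and $P_i\cdot P_{n+1}=0$ by orthogonality, so $P_i$ annihilates $\ker\phi_X$. Therefore $P_i\in J_X$ for $1\le i\le n$. Conversely, any element $a=\sum_{k=1}^{n+1}c_k P_k\in J_X$ must satisfy $aP_{n+1}=c_{n+1}P_{n+1}=0$, forcing $c_{n+1}=0$ and placing $a$ in $\opsp\{P_1,\dots,P_n\}$, which (since $A$ is commutative and finite dimensional) is already a closed ideal. Hence $J_X$ coincides with the ideal generated by $P_1,\dots,P_n$.

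No serious obstacle arises: the proof is essentially a direct verification on a finite basis. The only point demanding a little care is the bookkeeping of which indices $j$ appear in the rank-one sums, in order to ensure agreement with $\phi_X(P_i)$ on the ``boundary'' vector $w_{i,n+1}$ as well.
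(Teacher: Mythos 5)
Your proof is correct and takes essentially the same route as the paper's: identify $\ker\phi_X=\CC P_{n+1}$ and verify the rank-one formulas by comparing both operators on the spanning vectors $w_{k,l}$. Two points are worth highlighting. First, your bookkeeping of the index range is not a pedantic detail: as you observe, $\phi_X(P_i)$ acts as the identity on $w_{i,n+1}$, which lies in $X$ for every $1\le i\le n$, while $\sum_{j=i}^{n}\theta_{w_{i,j},w_{i,j}}$ annihilates it; so the formula in (1) only holds with upper summation limit $n+1$, and with that correction (2) is simply the case $i=n$. The printed statement (and the displayed computation in the paper's own proof, where the step $\sum_{j=i}^n\delta_{i,k}\delta_{j,l}w_{i,j}P_j=\delta_{i,k}w_{k,l}$ fails when $l=n+1$) contains exactly this slip, which your argument silently repairs; this corrected form is also what is actually needed later when verifying the covariance condition $\rho_A(P_i)=\rho_X^+(\phi_X(P_i))$. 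Second, you spell out why $J_X=\opsp\{P_1,\dots,P_n\}$ --- the operators $\phi_X(P_i)$, $i\le n$, are finite rank and hence compact, $P_iP_{n+1}=0$, and conversely $aP_{n+1}=0$ forces the coefficient of $P_{n+1}$ to vanish --- a step the paper compresses into the remark that it suffices to prove (1) and (2). So your proposal is a complete (indeed slightly more careful) version of the paper's argument rather than a different one.
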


\begin{proof}
We can immediately see from the definition of $\phi_X$ that $\ker(\phi_X)$ is generated by $P_{n+1}$, so it will suffice to show (1) and (2). For $1\leq i\leq n-1$ have
\begin{eqnarray*}
 \left(\sum_{j=i}^n \theta_{w_{i,j},w_{i,j}}\right)(w_{k,l}) &=& \sum_{j=i}^n w_{i,j}\langle w_{i,j},w_{k,l}\rangle  \\
 &=& \sum_{j=i}^n \delta_{i,k}\delta_{j,l} w_{i,j} P_j \\
 &=& \delta_{i,k} w_{k,l} \\
 &=& \phi_X(P_i) w_{k,l}.
\end{eqnarray*}
So since $ \phi_X(P_i)$ and $\sum_{j=i}^n \theta_{w_{i,j},w_{i,j}}$ agree on generators, they must be the same operator.
Similarly, $(\theta_{w_{n,n},w_{n,n}} + \theta_{w_{n,n+1},w_{n,n+1}})w_{i,j} = \phi_X(P_n)w_{i,j}$ as required.
\end{proof}

\begin{lemma} \label{lemma:idealinB}
The ideal $J_Y$ is equal to $B$. Furthermore we have the following relations:
\begin{enumerate}
 \item For $1\leq i \leq n-1$ we have $\phi_Y(R_i) = \displaystyle\sum_{j=i}^n \theta_{x_{i,j},x_{i,j}}$
 \item $\phi_Y(R_n) = \theta_{y-y',y-y'}$
 \item $\phi_Y(R_{n+1}) = \theta_{y',y'}$
 \item For $i\geq 1$ we have $\phi_Y(Q_i) = \theta_{y_i,y_i}$.
\end{enumerate}
\end{lemma}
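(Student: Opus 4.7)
My plan is to first verify the four explicit formulas for $\phi_Y$ by direct calculation on a spanning set, then use them to deduce $J_Y = B$. Since $Y$ is the closed linear span of $Y_1 \cup Y_2 \cup Y_3$, any equality of adjointable operators on $Y$ reduces to equality on this generating set. The left-hand side of each formula is read off from the definition of $\phi_Y$; the right-hand side is a sum of rank-one operators evaluated via $\theta_{\xi,\xi}(\zeta) = \xi\langle\xi,\zeta\rangle$, using the defining $B$-valued inner product and the right action of $B$ on $Y$. The least obvious computations occur in formula (2), where one needs $\langle y-y',y\rangle = R_n - Q_1$, $\langle y-y',y'\rangle = Q_1 - Q_1 = 0$, and $\langle y-y',y_l\rangle = Q_{l+1}$, combined with $(y-y')(R_n - Q_1) = y-y'$ and $(y-y')Q_{l+1} = y_l$, to reproduce $\phi_Y(R_n)y = y-y'$, $\phi_Y(R_n)y' = 0$, and $\phi_Y(R_n)y_l = y_l$. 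Formulas (1), (3), and (4) are analogous, splitting naturally into independent computations on orthogonal pieces of $Y$.

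\textbf{Deducing $J_Y = B$.} Granting formulas (1)--(4), every generator $R_i, Q_j$ of $B$ is sent by $\phi_Y$ into $\Kk(Y)$, so by linearity and continuity $\phi_Y(B) \subset \Kk(Y)$. For the other defining condition of $J_Y$---that $b \cdot \ker(\phi_Y) = 0$ for every $b \in B$---the existence of approximate units in $B$ reduces this to $\ker(\phi_Y) = 0$. To prove injectivity, decompose $B$ using the centrally orthogonal projections as $B = \CC R_1 \oplus \cdots \oplus \CC R_{n-1} \oplus \CC R_{n+1} \oplus C^*(R_n, Q_j : j \geq 1)$, which is valid because $R_1,\ldots,R_{n-1},R_{n+1}$ are central projections orthogonal to the remaining generators. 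Injectivity on each one-dimensional summand is immediate from $\phi_Y(R_i) \neq 0$. On the commutative subalgebra $C^*(R_n, Q_j)$, the formulas realize each generator as a nonzero projection satisfying the correct defining relations ($Q_j$'s mutually orthogonal and dominated by $R_n$); crucially, each partial difference $\phi_Y(R_n) - \sum_{j \leq N}\phi_Y(Q_j)$ remains nonzero because it acts as the identity on the independent vector $y-y'$, which lies outside the closed span of the $y_j$'s. This faithful realization of the generators and all their finite algebraic combinations, combined with commutativity, ensures that $\phi_Y$ is injective on this last summand.

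\textbf{Main obstacle.} Step~1 is routine but lengthy bookkeeping across many basis vectors. The conceptually delicate point is establishing injectivity of $\phi_Y$ on $C^*(R_n, Q_j)$: one must recognize that the ``residual direction'' $y-y'$ in $Y$ separates the limit point at infinity of the spectrum of $C^*(R_n, Q_j)$ from the discrete characters, so that the difference between $R_n$ and the (bidual) supremum of the $Q_j$'s is still detected faithfully by the representation. Without this observation, a naive argument would only rule out characters supported on a single $Q_j$.
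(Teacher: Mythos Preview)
Your overall strategy coincides with the paper's: verify formulas (1)--(4) on the spanning set, then deduce $J_Y=B$ from $\phi_Y(B)\subset\Kk(Y)$ together with injectivity of $\phi_Y$. The paper's proof is a two-line sketch that simply asserts injectivity; you attempt to supply the details, but your injectivity argument on the summand $R_nB=C^*(R_n,Q_j:j\geq1)$ contains both a computational slip and a logical gap.

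The slip: $\phi_Y(R_n)-\sum_{j\leq N}\phi_Y(Q_j)$ does \emph{not} act as the identity on $y-y'$. From the definition one has $\phi_Y(R_n)(y-y')=y-y'$ but $\phi_Y(Q_j)(y-y')=\phi_Y(Q_j)y-\phi_Y(Q_j)y'=y_j-0=y_j$, so the image is $(y-y')-\sum_{j\leq N}y_j$. This vector is still nonzero, so the conclusion that the partial differences do not vanish survives, but not for the reason you state.

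The gap: faithfulness on the dense $*$-subalgebra of finite algebraic combinations does \emph{not} in general imply injectivity on the $C^*$-closure, commutative or not (for instance, the quotient $C[0,1]\to C[0,1]/C_0((a,b))$ with $0<a<b<1$ is injective on polynomials but not on $C[0,1]$). What actually makes the argument work here is the topology of the spectrum: $R_nB\cong C(\NN\cup\{\infty\})$, and since $\{\infty\}$ is not open, every nonzero closed ideal of $R_nB$ contains some minimal projection $Q_j$. Hence $\phi_Y|_{R_nB}$ is injective as soon as each $\phi_Y(Q_j)=\theta_{y_j,y_j}$ is nonzero, which it is because $y_j\neq0$. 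Your ``residual direction'' $y-y'$ and the point at infinity play no role; contrary to your final remark, checking $\phi_Y(Q_j)\neq0$ for all $j$ is already sufficient.
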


\begin{proof}
It is not hard to see from the definition that $\phi_Y$ is injective, so to see that $J_Y=B$ we need only show that each generator is an element of $\Kk(Y)$. So it suffices to show that the relations (1) to (4) hold. This is straightforward verification again.
\end{proof}

\begin{proof}[Proof of Theorem \ref{thm:XYisomorphism}] We construct an isomorphism $\Pi_X:\Oo_X \to \Oo_Y$ by showing that there exists an injective covariant representation $(\rho_X,\rho_A)$ of $(X,A)$ on $\Oo_Y$ and an injective covariant representation $(\rho_Y,\rho_B)$ of $(Y,B)$ on $\Oo_X$ such that the induced homomorphisms are bijective and mutually inverse.

Define the linear map $\rho_X$ on the generators of $X$ by
\begin{align*}
 \rho_X(w_{i,j}) &= \pi_Y(x_{i,j}) \mbox{ for } i=1,\dots,n-1, j=i,\dots,n+1 \\
 \rho_X(w_{n,n}) &= \pi_Y(y)^*-\pi_Y(y')^* \\
 \rho_X(w_{n,n+1}) &= \pi_Y(y')^*
\end{align*}
and define the homomorphism $\rho_A$ on the generators of $A$ by
\begin{align*}
 \rho_A(P_i) &= \pi_B(R_i) \mbox{ for } 1\leq i\leq n-1, \ \ \ \rho_A(P_n)=\pi_B(R_n), \ \ \ \rho_A(P_{n+1}) = \pi_B(R_{n+1})
\end{align*}
Routine caculations show that $(\rho_X,\rho_A):(X,A)\to \Oo_Y$ satisfy (C1) and (C2). We must show (C4). We know from Lemma \ref{lemma:idealinA} that $J_X$ is generated by the projections $P_i$ where $1\leq i \leq n$. So we need to show that we have
\[
 \rho_A(P_i) = \rho_X^+(\phi_X(P_i))
\]
for $1\leq i\leq n$. This also follows easily from Lemma \ref{lemma:idealinA}. So we have a covariant representation $(\rho_X,\rho_A)$ of $(X,A)$ on $\Oo_Y$.

Now, we want to construct another covariant representation of $(Y,B)$ on $\Oo_X$. Define a linear map $\rho_Y:Y\to \Oo_X$ by
\begin{eqnarray*}
  \rho_Y(x_{i,j}) &=& \pi_X(w_{i,j}) \\
  \rho_Y(x_{i,n,j}) &=& \pi_X(w_{i,n})a_j \\
  \rho_Y(y) &=& \pi_X(w_{n,n})^*+\pi_X(w_{n,n+1})^* \\
  \rho_Y(y') &=& \pi_X(w_{n,n+1})^* \\
  \rho_Y(y_i) &=& a_i(\pi_X(w_{n,n})^*+\pi_X(w_{n,n+1})^*)
\end{eqnarray*}
and a $*$-homomorphism $\rho_B:B\to \Oo_X$ satisfying
\[
 \begin{array}{ll}
  \rho_B(R_i) = P_i & \rho_B(Q_i) = a_i
 \end{array}
\]
where $a_i$ is the projection
\[
 a_i = (\pi_X(w_{n,n})+\pi_X(w_{n,n+1}))^i \pi_A(P_{n+1})(\pi_X(w_{n,n})^*+\pi_X(w_{n,n+1})^*)^i \in \Oo_X.
\]
We can easily show from Lemma \ref{lemma:idealinB} that $(\rho_Y,\rho_B):(Y,B)\to \Oo_X$ is also a covariant representation.

Now, each representation induces an injective map, $\Pi_X:\Oo_X\to \Oo_Y$ and $\Pi_Y:\Oo_Y\to \Oo_X$ so all that remains to be seen is that $\Pi_X\circ\Pi_Y = 1_{\Oo_Y}$ and $\Pi_Y\circ\Pi_X=1_{\Oo_X}$. This is straightforward, and the result follows.
\end{proof}

Now let $\omega_Y:Y\to Z$ be the unique linear map satisfying $\pi_Z(\omega_Y(\xi)) = \Psi(\Pi_Y(\pi_Y(\xi)))$ and $\omega_B:B\to C$ be the unique homomorphism satisfying $\pi_C(\omega_B(b)) = \Psi(\Pi_Y(\pi_B(b)))$. Then it is easily checked that $(\omega_Y,\omega_B)$ is a $C^*$-correspondence morphism, and furthermore $\Omega = \beta\circ\Psi\circ\Pi_Y$.

Before we can apply Theorem \ref{thm:pullback} we need to show that these $C^*$-correspondences satisfy the hypotheses. We have both $\psi_X$ and $\omega_Y$ surjective, and also $\psi_A$ and $\omega_B$ surjective. Lemma \ref{lemma:idealinA} and Lemma \ref{lemma:idealinB} imply that both left actions are by compact operators.  Furthermore, $\psi_A(\ker(\phi_X)) = \{0\} = \omega_B(\ker(\phi_Y))$. If we set $J_A := C^*(P_1,\dots P_n)$, then it is clear that $J_A$ is an ideal in $A$ and $A = J_A \oplus \ker(\phi_X)$, so $\ker(\phi_X)$ is complemented. The ideal $\ker(\phi_Y)$ is just the zero ideal, so it is also trivially complemented.
Hence we can use Theorem \ref{thm:pullback}. There is an isomorphism between the pullback $\Oo_X \oplus_{\Oo_Z} \Oo_Y$ and the $C^*$-algebra $\Oo_{X\dsz Y}$, where the underlying Banach space satisfies
\begin{align*}
 X\dsz Y = \clsp&(\{(w_{i,j},x_{i,j}:1\leq i \leq n-1, i\leq j \leq n+1  \\  &\cup \{(0,x_{i,n,j}):1\leq i \leq n, j\geq 1\} \cup \{(0,y_i):i\geq 1\} \\ &\cup \{(w_{n,n},y), (w_{n,n+1}, 0), (0, y')\})
\end{align*}
and the pullback $C^*$-algebra $A\dsc B$ is generated by projections
\[
 \{(P_i,R_i:1\leq i \leq n)\} \cup \{(0,Q_i:i\geq 1\} \cup \{(P_n,R_n),(P_{n+1},0),(0,R_{n+1})\}
\]
where $(0,Q_i) \leq (P_n,R_n)$, and all other projections are orthogonal. So we have shown that $C(S^{2n}_{q,\beta}) \cong \Oo_{X\dsz Y}$
is a $C^*$-algebra associated to a $C^*$-correspondence.

\section{$C^*$-algebras of Labelled Graphs} \label{sec:labelledgraphs}

Now, we can use this characterisation to prove that $C(S^{2n}_{q,\beta})$ is actually a $C^*$-algebra associated to a labelled graph, first introduced by Bates and Pask in \cite{BaPa}. We begin by recalling basic definitions and results from \cite{BaPa}, and then showing that with an appropriate notion of morphisms between labelled spaces, the class of labelled spaces forms a category. Furthermore, there is a functor from this category to the category of $C^*$-correspondences constructed earlier.

\begin{defn} A labelled graph $(E,\Ll)$ over an alphabet $\Aa$ is a directed graph $E$ together with a labelling map $\Ll: E^1\to \Aa$ which assigns to each edge $e\in E^1$ a label $a\in\Aa$.
\end{defn}

It is worth noting at this point that there are no conditions imposed on the map $\Ll$; it is not assumed to be either injective or surjective. However, in practice we generally make $\Ll$ surjective by simply discarding any elements of $\Aa$ which do not lie in the range of $\Ll$.

Define a \emph{word} to be a finite string $a_1a_2\dots a_n$, with each $a_i\in \Aa$. We write $\Aa^*$ for the collection of all words in $\Aa$ and then for any $n\in \NN$, the labelling can be extended to $E^n$ by defining $\Ll(e_1e_2\dots e_n) = \Ll(e_1)\Ll(e_2)\dots\Ll(e_n) \in \Aa^*$. We write $\Ll^*(E) = \cup_{n\geq 1} \Ll(E^n)$. An element $\alpha\in\Ll^*(E)$ is called a \emph{labelled path}.

\begin{defn}
Let $(E,\Ll)$ be a labelled graph. We say that $(E,\Ll)$ is \emph{left-resolving} if for all $v\in E^0$, the labelling $\Ll$ restricted to $r^{-1}(v)$ is injective.
\end{defn}
In other words, a labelled graph is left-resolving if all edges entering a particular vertex carry different labels.

\begin{defn}
Let $(E,\Ll)$ be a labelled graph, let $A\subset E^0$ and let $\alpha\in\Ll(E^*)$ be a labelled path. The \emph{relative range of $\alpha$ in A}, denoted $r(A,\alpha)$ is defined to be the set
\[
 r(A,\alpha) := \{r(\lambda):\lambda\in E^*, \Ll(\lambda) = \alpha \mbox{ and } s(\lambda)\in A\}
\]
\end{defn}
Now, let $\Bb\subset 2^{E^0}$ be a collection of subsets of $E^0$. We say that $\Bb$ is \emph{closed under relative ranges} if for any $A\in\Bb$ and $\alpha\in \Ll^*(E)$, we have $r(A,\alpha)\in \Bb$. We say that $\Bb$ is \emph{accommodating} for $(E,\Ll)$ if it is closed under relative ranges, contains $r(\alpha)$ for all $\alpha\in\Ll^*(E)$, contains $\{v\}$ whenever $v$ is a sink, and is also closed under finite unions and intersections.

\begin{defn} A labelled space is a triple $(E,\Ll,\Bb)$ where $(E,\Ll)$ is a labelled graph and $\Bb$ is accommodating for $(E,\Ll)$.
\end{defn}

\begin{defn} Let $(E,\Ll,\Bb)$ be a labelled space. We say $(E,\Ll,\Bb)$ is \emph{left-resolving} if $(E,\Ll)$ is left resolving and we say that $(E,\Ll,\Bb)$ is \emph{weakly left-resolving} if for every $A,B\in\Bb$ and $\alpha\in\Ll^*(E)$, we have
\[
 r(A,\alpha) \cap r(B,\alpha) = r(A\cap B,\alpha)
\]
\end{defn}

Note that we will need to assume our labelled spaces are weakly left-resolving in order for the associated $C^*$-algebras to be non-degenerate.

We now have the required definitions to define the $C^*$-algebra associated to a labelled space.

\begin{defn} \label{defn:labelledgraphrepn}
Let $(E,\Ll,\Bb)$ be a weakly left-resolving labelled space. A representation of $(E,\Ll,\Bb)$ is a collection $\{p_A:A\in \Bb\}$ of projections and a collection $\{s_a:a\in\Ll(E^1)\}$ of partial isometries such that:
\begin{enumerate}
 \item For $A,B\in\Bb$, we have $p_Ap_B = p_{A\cap B}$ and $p_{A\cup B} = p_A + p_B - p_{A\cap B}$ where $p_\emptyset = 0$;
 \item For $a\in\Ll(E^1)$ and $A\in\Bb$, we have $p_A s_a = s_a p_{r(A,a)}$;
 \item For $a,b\in\Ll(E^1)$, we have $s_a^*s_a = p_{r(a)}$ and $s_a^*s_b = 0$ unless $a=b$; and
 \item For $A\in\Bb$ define $L^1(A) := \{a\in\Ll(E^1):s(a)\cap A \neq \emptyset\}$. Then if $L^1(A)$ is finite and non-empty, we have
  \[
   p_A = \sum_{a\in L^1(A)} s_a p_{r(A,a)} s_a^* + \sum_{v\in A:v\mbox{ \tiny{is a sink}}} p_{\{v\}}.
  \]
\end{enumerate}
\end{defn}

As noted in Remark 3.2 of \cite{BaPa2}, there was an error in the original definition of $C^*(E,\Ll,\Bb)$, where projections at sinks would be degenerate. Hence the proof of the existence of the $C^*$-algebra associated to a labelled graph given in \cite{BaPa} doesn't hold when the underlying graph contains sinks. Since we will be looking specifically at a labelled graph with sinks, we reprove the result in the required generality here. The proof closely follows that of the proof of the existence of $C^*(E)$ when $E$ is a directed graph with sinks.

\begin{prop}
Let $(E,\Ll, \Bb)$ be a weakly left-resolving labelled space. Then there exists a $C^*$-algebra $B$ generated by a universal representation of $\{s_a, p_A\}$ of $(E,\Ll, \Bb)$. Furthermore, the $s_a$'s are nonzero and every $p_A$ with $A\neq \emptyset$ is nonzero.
\end{prop}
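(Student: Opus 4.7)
The plan is to split the statement into two parts: existence of the universal $C^*$-algebra, and nonzeroness of the canonical generators, and attack them independently. For existence, I would use the standard generators-and-relations enveloping construction. In any $*$-representation on Hilbert space the operators $s_a$ are partial isometries and the $p_A$ are projections, and so are contractions; hence the $C^*$-seminorm $\|x\| = \sup_\pi \|\pi(x)\|$ on the free $*$-algebra generated by abstract symbols $\{s_a\} \cup \{p_A\}$ modulo relations (1)--(4) is automatically finite on every element, and the Hausdorff completion yields the universal $C^*$-algebra $B$.

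For nonzeroness I would mimic the path-space argument used to establish existence of $C^*(E)$ for a directed graph with sinks. Concretely, take $\mathcal{H} = \ell^2(\Omega)$, where $\Omega$ is the set of finite paths in $E$ of positive length together with the length-zero paths at sinks of $E$; define $P_A$ to be the projection onto $\clsp\{\delta_\lambda : s(\lambda) \in A\}$, and define $S_a$ by prepending edges of label $a$,
\[
 S_a \delta_\lambda \;=\; \sum_{e \in E^1:\; r(e)=s(\lambda),\ \Ll(e)=a} \delta_{e\lambda}.
\]
Verifying (1)--(3) is then routine; the Cuntz--Krieger relation (4) splits into two complementary cases depending on whether the basis vector is a positive-length path (handled by the first sum on the right-hand side) or a length-zero path at a sink (handled by the sink sum). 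Once this representation is in place, one immediately reads off $S_a \neq 0$ for every $a \in \Ll(E^1)$ (apply $S_a$ to any $\delta_\lambda$ with $s(\lambda) = r(e)$ for some $e$ of label $a$) and $P_A \neq 0$ for every $A \neq \emptyset$ (any such $A$ contains either a sink or a non-sink with at least one outgoing edge, providing a visible basis vector at which $P_A$ acts as the identity); the universal property of $B$ then transfers these nonvanishings back to its own generators.

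The main obstacle I anticipate is the bookkeeping forced by relation (4), and in particular the choice of $\Omega$: length-zero paths at non-sink vertices must be excluded (otherwise (4) fails on the corresponding $\delta_v$, since the right-hand side vanishes while $p_{\{v\}}\delta_v = \delta_v$), while length-zero paths at sinks must be retained (otherwise the projections at sinks vanish and $P_{\{v\}}$ would be zero for every sink $v$). A secondary subtlety is that under the hypothesis of being only weakly left-resolving rather than left-resolving, the sum defining $S_a$ may have more than one term, so $S_a$ may fail to be a partial isometry in this concrete model; I would handle this either by passing first to the quotient labelled graph in which parallel edges sharing both label and range are identified, or by modifying the index set $\Omega$ so that such paths are identified, using the accommodating property of $\Bb$ to check that this identification is compatible with the relative-range operations appearing in (4).
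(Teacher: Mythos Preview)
Your approach differs substantially from the paper's, and the point you flag as a ``secondary subtlety'' is in fact the central obstacle. The paper does not build a path-space representation at all. Instead it grafts an infinite tail onto each sink of $E$ to produce a sink-free labelled graph $F$ (the new edges each carrying its own fresh label), invokes the original Bates--Pask existence theorem \cite[Theorem~4.5]{BaPa} for sink-free weakly left-resolving labelled spaces to obtain $C^*(F,\Ll,\Bb_F)$ with nonzero generators, and then verifies that the generators indexed by $(E,\Ll,\Bb)$ already satisfy relations (1)--(4) for $(E,\Ll,\Bb)$ --- the only nontrivial check being the modified relation (4) at sets containing sinks, where the extra tail edge produces exactly the sink term $p_{\{v\}}$. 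This reduction outsources the weakly-left-resolving difficulty entirely to the cited result.

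Your concrete model on $\ell^2(\Omega)$ genuinely fails when $(E,\Ll)$ is weakly left-resolving but not left-resolving, and neither of your proposed fixes repairs it. If distinct edges $e,f$ satisfy $r(e)=r(f)=v$ and $\Ll(e)=\Ll(f)=a$, then for any $\lambda\in\Omega$ with $s(\lambda)=v$ one computes $\langle S_a\delta_\lambda, S_a\delta_\lambda\rangle \geq 2$, so $S_a^*S_a \neq P_{r(a)}$ and relation (3) fails outright; relation (2) can fail as well when $s(e)\neq s(f)$. Identifying \emph{parallel} edges (same source, same range, same label) leaves the case $s(e)\neq s(f)$ untouched; and identifying the paths $e\lambda$ and $f\lambda$ in $\Omega$ when $s(e)\neq s(f)$ makes the action of $P_A$ ill-defined, since $s(e\lambda)\in A$ and $s(f\lambda)\in A$ are independent conditions. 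The weakly-left-resolving hypothesis governs how relative ranges interact with intersections of sets in $\Bb$, but it does not force such $e,f$ to share a source, so no quotient of the underlying graph rescues the construction. What does work is either the paper's tail-adding reduction, or a direct import of the (more elaborate) representation Bates and Pask build in the sink-free case --- your finite-path space is too naive a carrier for the labelled relations.
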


\begin{proof}
Fix a weakly left-resolving labelled space $(E,\Ll, \Bb)$. We define a new directed graph $F$ by
\[
 F^0 = E^0 \cup \{v_i:v \mbox{ is a sink}, i\in\NN\} \ \mbox{ and } \ F^1 = E^1 \cup \{f_{v,j}:v \mbox{ is a sink}, j\in\NN\}
\]
and extend the range and source maps on $E$ to $F$ by
\[
 s(f_{v,i}) = \left\{ \begin{array}{l} v \mbox{ if } i=1 \\ v_{i-1} \mbox{ if } i>1 \end{array} \right. \ \mbox{ and } \ r(f_{v,i}) = v_i.
\]

We also extend the labelling $\Ll$ on $E$ to a labelling on $F$ by $\Ll|_{(F^1\backslash E^1} = \id$. Define the set $\Bb_F \subset 2^{F^0}$ to be the smallest subset containing $\Bb \cup \{\{v_i\}:v_i\in F^0\backslash E^0\}$ that is accommodating for $(F,\Ll)$. Then it is easily checked that $(F,\Ll,\Bb_F)$ is a labelled space, and weakly left resolving if and only if $(E,\Ll,\Bb)$ is. Furthermore, since $F$ has no sinks it follows from \cite[Theorem 4.5]{BaPa} that $C^*(F,\Ll,\Bb_F)$ exists.

Let $\{s_a:a\in \Ll(F^1)\}, \{p_A:A\in \Bb_F\}$ be the universal representation of $(F,\Ll,\Bb_F)$. Then we claim that the restriction $\{s_a:a\in\Ll(E^1)\}, \{p_A:A\in \Bb\}$ is a representation of $(F,\Ll,\Bb_F)$. Indeed, condition (1) - (3) hold in $(E,\Ll,\Bb)$ as they do in $(F,\Ll,\Bb_F)$. However condition (4) is not so clear as any sink in $E$ is not a sink in $F$. So fix a set $A_E\in \Bb$ satisfying the conditions of (4). If $A_E$ does not contain any sinks then the result is obvious, so assume that $A_E$ contains finitely many, but at least one sink.

Denote by $A_F$ the identical set considered as an element of $\Bb_F$. Then it is easy to see that $L^1_{A_E} \cup \{\{f_{v,1}\}:v\in A_E \mbox{ is a sink}\} = L^1_{A_F}$. So
\begin{eqnarray*}
 p_{A_E} &=& p_{A_F} \\
 &=& \sum_{a\in L^1_{A_F}} p_{A_F}s_a s_a^* \\
 &=& \sum_{a\in L^1_{A_E}} p_{A_E}s_a s_a^* + \sum_{\{\{f_{v,1}\}:v\in A_E \mbox{\tiny{ is a sink}}\}} p_{A_E} s_{f_{v,1}} s_{f_{v,1}}^* \\
 &=& \sum_{a\in L^1_{A_E}} p_{A_E}s_a s_a^* + \sum_{\{\{f_{v,1}\}:v\in A_E \mbox{\tiny{ is a sink}}\}} p_{\{v\}} s_{f_{v,1}} s_{f_{v,1}}^* \\
 &=& \sum_{a\in L^1_{A_E}} p_{A_E}s_a s_a^* + \sum_{\{v\in A_E \mbox{\tiny{ is a sink}}\}} \left(\sum_{b\in L^1_{\{v\}}} p_{\{v\}} s_b s_b^* \right) \mbox{ since } L^1_{\{v\}}= \{f_{v,1}\} \\
 &=& \sum_{a\in L^1_{A_E}} p_{A_E}s_a s_a^* + \sum_{\{v\in A_E \mbox{\tiny{ is a sink}}\}} p_{\{v\}}  \ \ \mbox{ since $\{v\}\in \Bb_F$ satisfies (4)}
\end{eqnarray*}
as required. So we have the required result.
\end{proof}

We can now make the following definition.

\begin{defn}
Let $(E,\Ll,\Bb)$ be a weakly left-resolving labelled space. Define $C^*(E,\Ll,\Bb)$ to be the universal $C^*$-algebra generated by representations of $(E,\Ll,\Bb)$.
\end{defn}

Now that we have the definitions in place, we are ready to prove that the labelled spaces form a category and that there is a functor from this category to the category of $C^*$-correspondences such that the associated $C^*$-algebras are isomorphic.

We define the objects of our category to be labelled spaces $(E,\Ll,\Bb)$. A morphism $\psi_E: (E,\Ll_E,\Bb_E) \to (F,\Ll_F,\Bb_F)$ is a pair of maps
\[
 \psi_{E^0} : E^0 \cup \{0\} \to F^0 \cup \{0\} \ \mbox{ and } \ \psi_{E^1}:E^1 \cup \{0\} \to F^1 \cup \{0\}
\]
satisfying
\begin{enumerate}
 \item[(L1)] \label{L1} $\psi_{E^0}(0) = \psi_{E^1}(0) = 0$,
 \item[(L2)] \label{L2} $\psi_{E^0}(u) = \psi_{E^0}(v) \neq 0$ implies $v=w$,
 \item[(L3)] \label{L3} $\Ll_F(\psi_{E^1}(e)) = \Ll_F(\psi_{E^1}(f)) \neq 0$ implies $\Ll_E(e) = \Ll_E(f)$ where $\Ll_E(0) = \Ll_F(0) = 0$,
 \item[(L4)] \label{L4} $\{\psi_{E^0}(v):v\in r(a)\} = \{r(\psi_{E^1}(e)):\Ll_E(e) = a\}$ for all $a \in \Ll_E(E^1)$,
 \item[(L5)] \label{L5} $\{\psi_{E^0}(v):v\in A\} \setminus \{0\} \in \Bb_F$ for all $A\in \Bb_E$ and $0 < |L^1(A)| < \infty$ implies $0 < |L^1(\psi_{E^0}(A))| < \infty$.
\end{enumerate}
Properties (L3) and (L5) ensure that we can extend these maps to $\Ll_E(E^1)$ and $\Bb_E$ so we define $\psi_E(a) := \Ll_F(\psi_{E^1}(e))$ where $\Ll_E(e) = a$ and $\psi_E(A) := \{\psi_{E^0}(v):v\in A\} \setminus \{0\}$ for all $a\in \Ll(E^1)$ and $A\in\Bb_E$.

It is not hard to see that this gives a category with the obvious domain and range maps, identity morphism and composition of morphisms.

Now that we want to show that $C^*(E,\Ll,\Bb)$ can be naturally realised as a $C^*$-algebra associated to a $C^*$-correspondence, and that there is a functor between the category of labelled spaces and the category of $C^*$-correspondences that preserves the associated $C^*$-algebras up to isomorphism.

\begin{prop} \label{prop:labelledgraphcorrespondence}
Let $(E,\Ll,\Bb)$ be a weakly left-resolving labelled space. Then there exists a $C^*$-correspondence $(X(E),A(E))$ such that $\Oo_{X(E)} \cong C^*(E,\Ll,\Bb)$.
\end{prop}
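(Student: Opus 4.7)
The plan is to generalise Katsura's construction for ordinary graph $C^*$-algebras (\cite[Section 3.4]{Ka1}) to the labelled setting, replacing the vertex algebra $C_0(E^0)$ by a $C^*$-algebra built from the accommodating family $\Bb$ and the edge module $C_c(E^1)$ by a module built from the labels $\Ll(E^1)$. Concretely, I would first let $A(E)$ be the universal abelian $C^*$-algebra generated by projections $\{p_A : A \in \Bb\}$ satisfying the Boolean relations
\[
p_A p_B = p_{A \cap B}, \quad p_{A \cup B} = p_A + p_B - p_{A \cap B}, \quad p_\emptyset = 0;
\]
equivalently, $A(E)$ is the $C^*$-subalgebra of $C^*(E,\Ll,\Bb)$ generated by the universal projections.

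Next I would define $X(E)$ as the separated completion of the free right $A(E)$-module on symbols $\{s_a : a \in \Ll(E^1)\}$ with respect to the $A(E)$-valued inner product determined on generators by $\langle s_a, s_b\rangle = \delta_{a,b}\, p_{r(a)}$. A dense subspace is then spanned by the vectors $s_a p_A$, for which one computes
\[
\langle s_a p_A,\, s_b p_B \rangle = \delta_{a,b}\, p_{A \cap B \cap r(a)},
\]
together with the relation $s_a p_A = s_a p_{A \cap r(a)}$ in the quotient. I would then prescribe a left action $\phi_{X(E)} : A(E) \to \Ll(X(E))$ on generators by
\[
\phi_{X(E)}(p_A)(s_b p_B) = s_b\, p_{r(A,b) \cap B},
\]
motivated by relation (2) of Definition \ref{defn:labelledgraphrepn}. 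Closure of $\Bb$ under relative ranges ensures the right-hand side lies in $X(E)$, while the weakly left-resolving identity $r(A,b) \cap r(C,b) = r(A \cap C, b)$ is exactly what is needed for $\phi_{X(E)}$ to be multiplicative. With these ingredients in place, $(X(E),A(E))$ is a $C^*$-correspondence.

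The isomorphism $\Oo_{X(E)} \cong C^*(E,\Ll,\Bb)$ would then follow from two universal-property arguments. Starting from the universal covariant representation $(\pi_X,\pi_A)$ of $(X(E),A(E))$, I would set $P_A := \pi_A(p_A)$ and $S_a := \pi_X(s_a p_{r(a)})$ and verify that $\{S_a, P_A\}$ satisfies conditions (1)--(4) of Definition \ref{defn:labelledgraphrepn}; universality of $C^*(E,\Ll,\Bb)$ then yields a $*$-homomorphism $C^*(E,\Ll,\Bb) \to \Oo_{X(E)}$. Conversely, the universal labelled-space generators $\{s_a, p_A\}$ inside $C^*(E,\Ll,\Bb)$ furnish a covariant representation of $(X(E),A(E))$ via $\rho_A(p_A) = p_A$ and $\rho_X(s_a p_A) = s_a p_A$; checking (C1)--(C4) gives an inverse $*$-homomorphism $\Oo_{X(E)} \to C^*(E,\Ll,\Bb)$ by the universal property of $\Oo_{X(E)}$, and a direct check on generators shows the two maps are mutually inverse.

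The main obstacle will be matching the Cuntz--Krieger-type relation (4) of Definition \ref{defn:labelledgraphrepn} with Katsura's covariance condition (C4). For this I would first identify $\ker \phi_{X(E)}$ as the ideal generated by $\{p_{\{v\}} : v \text{ is a sink}\}$, then show that $\phi_{X(E)}(p_A) \in \Kk(X(E))$ exactly when $L^1(A)$ is finite, and finally establish the key identity
\[
\phi_{X(E)}(p_A) \;=\; \sum_{a \in L^1(A)} \theta_{s_a p_{r(A,a)},\, s_a p_{r(A,a)}}
\]
in $\Kk(X(E))$ for each such $A$. The weakly left-resolving hypothesis is essential here a second time, to kill the cross terms $\theta_{s_a p_{r(A,a)},\, s_b p_{r(A,b)}}$ with $a \neq b$; and splitting any $A$ into its sink part and its non-sink part shows that relation (4) decomposes into the Katsura covariance on the non-sink part together with trivial identities for the sink projections.
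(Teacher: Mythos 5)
Your correspondence $(X(E),A(E))$ is essentially the one the paper builds, but there is a genuine gap at the outset: the claimed equivalence between your universal abelian $C^*$-algebra on the Boolean relations and the $C^*$-subalgebra of $C^*(E,\Ll,\Bb)$ generated by $\{p_A\}$ is false in general, and the argument breaks down if you work with the universal model. Relation (4) of Definition~\ref{defn:labelledgraphrepn} can force identifications among the $p_A$ that do not follow from the Boolean relations alone. For example, take vertices $u_1,u_2,\dots$ and one sink $w$, an edge $u_i\to w$ for every $i$, all edges carrying the same label $a$, and let $\Bb$ be generated (under relative ranges, finite unions and intersections) by $\{w\}$, $A=\{u_2,u_3,\dots\}$ and $B=\{u_1,u_2,\dots\}$. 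This labelled space is weakly left-resolving, and relation (4) forces $p_A=s_ap_{\{w\}}s_a^*=p_B$ in $C^*(E,\Ll,\Bb)$ even though $A\neq B$; in fact $C^*(E,\Ll,\Bb)\cong M_2(\CC)$. With the universal Boolean choice of $A(E)$ one has $p_A\neq p_B$, so $p_B-p_A\in\ker\phi_{X(E)}$ and $p_B\notin J_{X(E)}$; since $\pi_{A(E)}$ is injective, relation (4) for the set $B$ together with Katsura covariance for $p_A$ would force $\pi_{A(E)}(p_B)=\pi_{A(E)}(p_A)$, a contradiction. Hence your elements $P_A,S_a$ do not satisfy (4), the desired map $C^*(E,\Ll,\Bb)\to\Oo_{X(E)}$ does not exist, and indeed here $\Oo_{X(E)}\cong M_2(\CC)\oplus\CC\not\cong C^*(E,\Ll,\Bb)$. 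The paper sidesteps this by \emph{defining} $A(E)$ and $X(E)$ as subspaces of $C^*(E,\Ll,\Bb)$ itself, so that every consequence of relations (1)--(4) is already encoded in the coefficient algebra; you must do the same.

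With that correction, your overall strategy (mutually inverse maps from the two universal properties, rather than the paper's route of one homomorphism whose injectivity comes from the labelled-graph gauge-invariant uniqueness theorem \cite[Theorem 5.3]{BaPa} and whose surjectivity comes from $\pi_{X(E)}(s_ap_B)=\pi_{X(E)}(s_ap_{r(a)})\pi_{A(E)}(p_B)$) can be made to work, but your treatment of the sinks needs restating. What the splitting argument actually requires is that for each $A\in\Bb$ with $L^1(A)$ finite and non-empty the projection $q_A:=p_A-\sum_{v}p_{\{v\}}$, the sum running over the sinks $v\in A$, lies in Katsura's ideal $J_{X(E)}$, i.e.\ $\phi_{X(E)}(q_A)\in\Kk(X(E))$ \emph{and} $q_A$ is orthogonal to all of $\ker\phi_{X(E)}$ (note that $q_A$ is in general not of the form $p_C$ with $C\in\Bb$); only then does (C4) applied to $q_A$ deliver relation (4), and a symmetric check is needed for the reverse direction. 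Your preliminary claim that $\ker\phi_{X(E)}$ is the ideal generated by the sink projections is neither sufficient for this nor true in general: if $\Bb$ contains an infinite set of sinks, its projection lies in $\ker\phi_{X(E)}$ but not in that ideal. The identity $\phi_{X(E)}(q_A)=\sum_{a\in L^1(A)}\theta_{s_ap_{r(A,a)},\,s_ap_{r(A,a)}}$ is the right computation, and the weakly left-resolving hypothesis enters, as you say, in making $\phi_{X(E)}$ multiplicative.
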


\begin{proof}
Let $A(E)$ be the $C^*$-subalgebra of $C^*(E,\Ll,\Bb)$ generated by the set of projections $\{p_A:A\in\Bb\}$ and $X(E)$ be the Banach subspace of $C^*(E,\Ll,\Bb)$ spanned by the elements $\{s_a p_B: a\in\Ll(E^1), B\in \Bb\}$. Define the right action of $A(E)$ on $X(E)$ simply by multiplication, and the inner-product by'

\[
 \langle s_a p_B, s_c p_D\rangle  := (s_a p_B)^* s_c p_D = \delta_{a,c} \ p_{B\cap D\cap r(a)}.
\]

Similarly, define the left action $\phi_{X(E)}$ by
\[
 \phi_{X(E)}(p_C) s_a p_B := s_a p_{B\cap r(a,C)}.
\]
Then we have a $C^*$-correspondence. In order to see that there is an isomorphism between $\Oo_{X(E)}$ and $C^*(E,\Ll,\Bb)$, we show that there is a covariant representation of $(E,\Ll,\Bb)$ inside $\Oo_{X(E)}$. Indeed, by construction of $X(E)$ and $A(E)$ it is easily show that
\[
 \{\pi_{A(E)}(p_A):A\in\Bb\}, \{\pi_{X(E)}(s_a p_{r(a)}):a\in\Ll(E^1)\}
\]
is such a representation. Then since the universal covariant representation $(\pi_{X(E)},\pi_{A(E)})$ of $(X(E),A(E))$ admits a gauge action, the gauge invariant uniqueness theorem for labelled graphs, \cite[Theorem 5.3]{BaPa} implies that $C^*(E,\Ll,\Bb)$ is isomorphic to the $C^*$-subalgebra of $\Oo_{X(E)}$ generated by $\{\pi_{X(E)}(p_A):A\in\Bb\}$ and $\{t_{X(E)}(s_a p_{r(a)}):a\in\Ll(E^1)\}$. Also, since we have $\pi_{X(E)}(s_a p_A) = \pi_{X(E)}(s_a p_{r(a)}) \pi_{A(E)}(p_A)$, this sub-algebra contains the generators of $\Oo_{X(E)}$, so we have the required isomorphism.
\end{proof}

\begin{prop}
There is a functor between the category of labelled spaces and the category of $C^*$-correspondences constructed earlier such that $(E,\Ll,\Bb) \mapsto (X(E),A(E))$.
\end{prop}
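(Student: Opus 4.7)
The plan is to build the functor's action on morphisms by exploiting the universality of the labelled graph $C^*$-algebra. Given a morphism $\psi_E\colon (E,\Ll_E,\Bb_E)\to(F,\Ll_F,\Bb_F)$ of labelled spaces, I would set the candidates on generators by $\psi_{A(E)}(p_A)=p_{\psi_E(A)}$ (with $p_\emptyset:=0$) and $\psi_{X(E)}(s_ap_B)=s_{\psi_E(a)}p_{\psi_E(B)}$ (with $s_0:=0$). To see this is well defined, I would check that the family $\{s_{\psi_E(a)}\}\cup\{p_{\psi_E(A)}\}$ in $C^*(F,\Ll_F,\Bb_F)$ is itself a representation of $(E,\Ll_E,\Bb_E)$ in the sense of Definition~\ref{defn:labelledgraphrepn}: property (L2) yields the meet/join relation (1); properties (L3) and (L4) give the absorption relation (2) and the orthogonality/range relations (3); and (L5) is exactly what is needed to transport relation (4) to the image, since it controls both the finiteness of $L^1(\psi_E(A))$ and the fact that $\psi_E(A)\in\Bb_F$. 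Universality of $C^*(E,\Ll_E,\Bb_E)$ then produces a $*$-homomorphism into $C^*(F,\Ll_F,\Bb_F)$, which restricts via the isomorphisms of Proposition~\ref{prop:labelledgraphcorrespondence} to maps $\psi_{A(E)}$ and $\psi_{X(E)}$ on the subobjects $A(E)\subset C^*(E,\Ll_E,\Bb_E)$ and $X(E)\subset C^*(E,\Ll_E,\Bb_E)$.

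Conditions (C1) and (C2) of Definition~\ref{defn:morf} are then immediate, because both the inner product and the left action on $X(E)$ are implemented by multiplication inside $C^*(E,\Ll_E,\Bb_E)$, and a $*$-homomorphism respects multiplication and involution. For (C3) and (C4) I would first pin down $J_{X(E)}$ concretely, by a calculation in the spirit of Lemma~\ref{lemma:idealinB}: one checks that whenever $0<|L^1(A)|<\infty$ the relation
\[
\phi_{X(E)}(p_A)=\sum_{a\in L^1(A)}\theta_{s_ap_{r(A,a)},\,s_ap_{r(A,a)}}+\sum_{v\in A,\,v\text{ a sink}}\theta_{p_{\{v\}},p_{\{v\}}}
\]
holds in $\Kk(X(E))$, so such $p_A$ together with the projections at sinks generate $J_{X(E)}$. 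Condition (L5) then says exactly that $\psi_{A(E)}$ sends these generators into $J_{X(F)}$, which gives (C3). For (C4) one applies $\psi_{X(E)}^+$ termwise to the displayed formula; since $\psi_{X(E)}^+(\theta_{\xi,\eta})=\theta_{\psi_{X(E)}(\xi),\psi_{X(E)}(\eta)}$ by construction, the result coincides with the analogous expression for $\phi_{X(F)}(p_{\psi_E(A)})$.

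Functoriality is then formal. The identity morphism of labelled spaces sends the generating representation of $C^*(E,\Ll_E,\Bb_E)$ to itself, so by uniqueness in the universal property the induced $*$-homomorphism is the identity, whose restriction is $(\id_{X(E)},\id_{A(E)})$. For composable morphisms $\psi_E$ and $\psi_F$, the two $*$-homomorphisms $C^*(E,\Ll_E,\Bb_E)\to C^*(G,\Ll_G,\Bb_G)$ associated with $\psi_F\circ\psi_E$ and with the composite $\Psi_F\circ\Psi_E$ agree on the generating representation, hence are equal, and the asserted equality of restrictions follows. The main obstacle is the verification of (C4): it requires enough control over $J_{X(E)}$ and $\Kk(X(E))$ to see that $\psi_{X(E)}^+$ intertwines the two left actions on the generators of $J_{X(E)}$, and this is precisely the point at which condition (L5) in the definition of a morphism of labelled spaces is used. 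Once (L5) is translated into the language of compact operators as above, the remaining steps are bookkeeping.
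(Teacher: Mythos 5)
Your overall architecture (push the generators forward, identify $J_{X(E)}$ through relation (4), use (L5) for (C3), apply $\psi_{X(E)}^+$ termwise for (C4), then formal functoriality) parallels the paper, but the step you rely on for well-definedness is a genuinely different route and it has a gap. You claim that $\{s_{\psi_E(a)}\}\cup\{p_{\psi_E(A)}\}$ is a representation of $(E,\Ll_E,\Bb_E)$ inside $C^*(F,\Ll_F,\Bb_F)$, so that the universal property of $C^*(E,\Ll_E,\Bb_E)$ produces a global $*$-homomorphism to restrict. That claim is strictly stronger than what (L1)--(L5) give, and it can fail. Relation (2) of Definition \ref{defn:labelledgraphrepn} for the pushed-forward family requires $\psi_E(r(A,a))=r(\psi_E(A),\psi_E(a))$ for every $A\in\Bb_E$, while (L4) only addresses full ranges $r(a)$; relation (3) requires $\psi_E(r(a))=r(\psi_E(a))$, where the right-hand side is the range of \emph{all} $F$-edges labelled $\psi_E(a)$, including edges not in the image of $\psi_{E^1}$; and relation (4) requires $L^1(\psi_E(A))$ to consist exactly of the labels $\psi_E(b)$, $b\in L^1(A)$, and sinks of $E$ in $A$ to behave like sinks in $F$, whereas (L5) only guarantees $0<|L^1(\psi_E(A))|<\infty$. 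If some vertex of $\psi_E(A)$ emits an $F$-edge whose label is not in the range of $\psi_E$, or a sink of $E$ maps to an emitter of $F$, the pushed-forward family violates relation (4) even though (L1)--(L5) all hold, and then the universal property cannot be invoked at all. The paper avoids this by never passing through the universal property of the labelled-graph algebra: it defines $\psi_{X(E)}$ and $\psi_{A(E)}$ directly on the spanning elements $s_ap_B$ and $p_B$ and verifies the correspondence-morphism conditions (C1)--(C3) by computation from (L2)--(L5).

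A second concrete error is your displayed formula for $\phi_{X(E)}(p_A)$. For a sink $v$ the element $p_{\{v\}}$ does not lie in $X(E)$ (which is spanned by the $s_ap_B$), so $\theta_{p_{\{v\}},p_{\{v\}}}$ is not an element of $\Kk(X(E))$; moreover $r(\{v\},a)=\emptyset$ for every label $a$, so $\phi_{X(E)}(p_{\{v\}})=0$, i.e.\ sink projections lie in $\ker\phi_{X(E)}$ and hence \emph{not} in $J_{X(E)}$ (they fail the condition $ab=0$ for $b\in\ker\phi_{X(E)}$). The correct identity is simply
\[
\phi_{X(E)}(p_A)=\sum_{a\in L^1(A)}\theta_{s_ap_{r(A,a)},\,s_ap_{r(A,a)}}
\]
when $L^1(A)$ is finite; the sink terms belong to relation (4) in $C^*(E,\Ll,\Bb)$, not to the left action on the correspondence. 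This matters for your (C3)/(C4) argument, since ``projections at sinks'' must be excluded from the generators of $J_{X(E)}$, and your termwise computation of (C4) still needs the same compatibility of $L^1$-sets and relative ranges under $\psi_E$ discussed above, which is exactly the point that does not come for free from (L1)--(L5).
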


\begin{proof}
Fix a morphism $\psi_E$ between two labelled spaces $(E,\Ll_E,\Bb_E)$ and $(F,\Ll_F,\Bb_F)$. We begin by defining a map $G$ on object sets by $G:(E,\Ll,\Bb) \mapsto (X(E),A(E))$. We extend this map to the set of morphisms by defining $G:\psi_E \mapsto (\psi_{X(E)},\psi_{A(E)})$ where $\psi_{X(E)}:X(E) \to X(F)$ and $\psi_{A(E)}:A(E) \to A(F)$ are a pair of maps satisfying
\[
 \psi_{X(E)}(s_a p_B) = s_{\psi_E(a)} p_{\psi_E(B)} \ \mbox{ and } \ \psi_{A(E)}(p_B) = p_{\psi_E(B)}
\]
for all $a\in\Ll_E(E^1)$ and $B\in\Bb_E$. In order to show that $G$ is in fact a functor, we need to show that $G(\psi_E) = (\psi_{X(E)},\psi_{A(E)})$ is a morphism of $C^*$-correspondences. For (C1), fix $s_a p_B$ and $s_c p_D \in X(E)$. It is clear from (L2) that we have $\psi_E(A\cap C) = \psi_E(A) \cap \psi_E(C)$ and from (L3) that $\psi_E(a) = \psi_E(c)$ if and only if $a=c$. Then
\begin{eqnarray*}
 \langle \psi_{X(E)}(s_a p_B), \psi_{X(E)}(s_c p_D)\rangle &=& \delta_{\psi_E(a),\psi_E(c)} p_{\psi_E(B) \cap \psi_E(D) \cap r(\psi_E(a))} \\
 &=& \delta_{\psi_E(a),\psi_E(c)} p_{\psi_E(B) \cap \psi_E(D) \cap \psi_E(r(a))} \mbox{ from (L4)} \\
 &=& \delta_{a,c} \ p_{\psi_E(B\cap D\cap r(a))} \\
 &=& \psi_{X(E)}(\delta_{a,c} \ p_{B\cap D\cap r(a)}) \\
 &=& \psi_{X(E)}(\langle s_a p_B, s_c p_D\rangle)
\end{eqnarray*}
as required. A similar calculation also shows that (C2) holds. Now, note that condition (4) of Definition \ref{defn:labelledgraphrepn} implies that
\[
 J_{X(E)} = \mbox{ideal generated by } \{p_A: L^1_A \mbox{ is finite and non-empty}\}.
\]
So (C3) easily follows from (L5). The other properties of the functor are easily shown to hold by definition of the map. Finally, that this functor preserves the associated $C^*$-algebras follows directly from Proposition \ref{prop:labelledgraphcorrespondence}.
\end{proof}

As an example of a labelled graph $C^*$-algebra, we show that for any $n\in\NN$, there exists a labelled space $(E_n,\Ll,\Bb)$ such that $C^*(E_n,\Ll,\Bb) \cong C(S^{2n}_{q,\beta})$.
\begin{exm}
Fix $n\geq 1$, and define a directed graph $E_n$ by
\[
 E_n^0 = \{u_i:1\leq i \leq n-1\} \cup \{v_i:i\geq 1\} \cup \{w_1,w_2\}
\]
and
\[
 E_n^1 = \{e_{i,j}:1\leq i,j \leq n-1\} \cup \{f_{i,j}:1\leq i \leq n-1, j\geq 1\} \cup \{g_i,h_i:i\geq 1\}
\]
with range and source maps which satisfy
\[
 \begin{array}{ll}
  s(e_{i,j}) = u_i & r(e_{i,j}) = u_j \\
  s(f_{i,j}) = u_i & r(f_{i,j}) = \left\{
  \begin{array}{l}
   w_1 \mbox{ for } j=1 \\
   w_2 \mbox{ for } j=2 \\
   v_{j-2} \mbox{ for } j\geq 3
  \end{array} \right. \\
  s(g_i) = \left\{
  \begin{array}{l}
  w_2 \mbox{ for } i=1 \\
  v_{i-1} \mbox{ for } i\geq 2
  \end{array} \right. & r(g_i) = v_i \\
  s(h_i) = v_i & r(h_i) = w_1
 \end{array}
\]
Now, define an alphabet $\mathcal{A}$ by
\[
 \mathcal{A} := \{e_{i_j}:1\leq i,j\leq n-1\} \cup \{f_i:1\leq i \leq n-1\} \cup \{g,h\}
\]
and a labelling $\Ll:E_n^1 \to \mathcal{A}$ which satisfies
\[
 \begin{array}{ll}
  \Ll(e_{i,j}) = e_{i,j} & \Ll(f_{i,j}) = f_i \\
  \Ll(g_i) = g & \Ll(h_i) = h
 \end{array}
\]
for all $i,j\in\NN$.
Then $(E_n,\Ll)$ is a labelled graph. For example, the labelled graph $(E_5,\Ll)$ looks like
\[
 \beginpicture
  \setcoordinatesystem units <2.4cm,2.4cm>
  \put{$ \ $} at 0 3.5
  \put{$ \ $} at 0 -3.5
  \put{$ \ $} at -1 0
  \put{$\bullet$} at 0 -1
  \put{$\bullet$} at 0 1
  \put{$\bullet$} at 0 -3
  \put{$\bullet$} at 0 3
  \put{$\bullet$} at 1 0
  \put{$\bullet$} at 2 0
  \put{$\bullet$} at 3 0
  \put{$\bullet$} at 4 0
  \put{$e_{1,1}$} at -1 -3
  \put{$e_{2,2}$} at -1 -1
  \put{$e_{3,3}$} at -1 1
  \put{$e_{4,4}$} at -1 3
  \put{$e_{1,2}$} at -.2 -2
  \put{$e_{2,3}$} at -.2 0
  \put{$e_{3,4}$} at -.2 2
  \put{$e_{1,3}$} at .2 -1.4
  \put{$e_{1,4}$} at .46 0
  \put{$e_{2,4}$} at .17 1.6
  \put{$\dots$} at 4.3 -.05
  \put{$u_1$} at -.12 -3
  \put{$u_2$} at -.12 -1
  \put{$u_3$} at -.12 1
  \put{$u_4$} at -.12 3
  \put{$w_1$} at 1.1 -.1
  \put{$w_2$} at 2.1 -.1
  \put{$v_1$} at 3.1 -.1
  \put{$v_2$} at 4.1 -.1
  \put{$h$} at 3.5 .8
  \put{$f_4$} at 2.5 1.8
  \put{$f_3$} at 1.1 1
  \put{$f_2$} at 1.1 -1
  \put{$f_1$} at 2.5 -1.8
  \put{$g$} at 2.3 0.1
  \put{$g$} at 3.3 0.1
  \setdashes
  \setlinear
  \plot 3.43 .73 3.1 .53 /
  \plot 3.43 .73 3.2 .23 /
  \plot 2.4 1.76 0.55 1.4 /
  \plot 2.4 1.76 1.2 1.21 /
  \plot 2.4 1.76 1.9 1.15 /
  \plot 2.4 1.76 2.4 1.25 /
  \plot 1.1 .9 0.42 0.57 /
  \plot 1.1 .9 .85 .57 /
  \plot 1.1 .9 1.1 .65 /
  \plot 1.1 .9 1.2 .75 /
  \plot 2.4 -1.76 0.55 -1.4 /
  \plot 2.4 -1.76 1.2 -1.21 /
  \plot 2.4 -1.76 1.9 -1.15 /
  \plot 2.4 -1.76 2.4 -1.25 /
  \plot 1.1 -.9 0.42 -0.57 /
  \plot 1.1 -.9 .85 -.57 /
  \plot 1.1 -.9 1.1 -.65 /
  \plot 1.1 -.9 1.2 -.75 /
  \setsolid
  \arrow <0.15cm> [0.25,0.75] from 0 -.9 to 0 .9
  \arrow <0.15cm> [0.25,0.75] from 0 -2.9 to 0 -1.1
  \arrow <0.15cm> [0.25,0.75] from 0 1.1 to 0 2.9
  \arrow <0.15cm> [0.25,0.75] from .1 -.9 to .9 -.05
  \arrow <0.15cm> [0.25,0.75] from .1 -.94 to 1.9 -.05
  \arrow <0.15cm> [0.25,0.75] from .1 -.98 to 2.9 -.05
  \arrow <0.15cm> [0.25,0.75] from .1 -1.02 to 3.9 -.05
  \arrow <0.15cm> [0.25,0.75] from .1 .9 to .9 .05
  \arrow <0.15cm> [0.25,0.75] from .1 .94 to 1.9 .05
  \arrow <0.15cm> [0.25,0.75] from .1 .98 to 2.9 .05
  \arrow <0.15cm> [0.25,0.75] from .1 1.02 to 3.9 .05
  \arrow <0.15cm> [0.25,0.75] from .1 -2.9 to .95 -.05
  \arrow <0.15cm> [0.25,0.75] from .1 -2.94 to 1.95 -.05
  \arrow <0.15cm> [0.25,0.75] from .1 -2.98 to 2.95 -.05
  \arrow <0.15cm> [0.25,0.75] from .1 -3.02 to 3.95 -.05
  \arrow <0.15cm> [0.25,0.75] from .1 2.9 to .95 .05
  \arrow <0.15cm> [0.25,0.75] from .1 2.94 to 1.95 .05
  \arrow <0.15cm> [0.25,0.75] from .1 2.98 to 2.95 .05
  \arrow <0.15cm> [0.25,0.75] from .1 3.02 to 3.95 .05
  \arrow <0.15cm> [0.25,0.75] from 2.1 0 to 2.9 0
  \arrow <0.15cm> [0.25,0.75] from -0.05 -2.88 to -0.041 -2.89
  \arrow <0.15cm> [0.25,0.75] from -0.05 -0.88 to -0.041 -0.89
  \arrow <0.15cm> [0.25,0.75] from -0.05 1.12 to -0.041 1.11
  \arrow <0.15cm> [0.25,0.75] from -0.05 3.12 to -0.041 3.11
  \arrow <0.15cm> [0.25,0.75] from 3.1 0 to 3.9 0
  \arrow <0.15cm> [0.25,0.75] from .06 0.86 to .05 0.9
  \arrow <0.15cm> [0.25,0.75] from .06 2.86 to .05 2.9
  \arrow <0.15cm> [0.25,0.75] from .03 2.86 to .03 2.9
  \arrow <0.15cm> [0.25,0.75] from 1.1 0 to 1.09 -.01
  \arrow <0.15cm> [0.25,0.75] from 1.1 0.04 to 1.09 .03
  \ellipticalarc axes ratio 2:1 320 degrees from -0.05 -2.88 center at -.6 -3
  \ellipticalarc axes ratio 2:1 320 degrees from -0.05 -0.88 center at -.6 -1
  \ellipticalarc axes ratio 2:1 320 degrees from -0.05  1.12 center at -.6  1
  \ellipticalarc axes ratio 2:1 320 degrees from -0.05  3.12 center at -.6  3
  \setquadratic
  \plot 1.1 0 2 .3 3 .1 /
  \plot 1.1 .04 2.5 .6 4 .1 /
  \plot .02 -2.9 .35 -1 .05 0.9 /
  \plot .02 -.9 .35 1 .02 2.9 /
  \plot .05 -2.9 .6 0 .05 2.9 /
 \endpicture
\]
where the dashed lines indicate that the edges all carry the same label.

Note that the vertex $w_1$ is a sink for all $E_n$. Now, it is obvious that $(E_n,\Ll)$ is not left-resolving for any $n\geq 2$, since the vertex $u_i$ emits infinitely many edges labelled $f_i$. So in order to associate a $C^*$-algebra to this labelled graph we must find a collection $\Bb \subset 2^{E_n^0}$ such that $(E_n,\Ll,\Bb)$ is a weakly left-resolving labelled space.
Let
\[
 B = \{\{u_i\}:1\leq i\leq n-1\} \cup \{\{w_1\}\} \cup \{A_i:i\geq 1\} \cup \{A_1\cup\{w_2\}\}
\]
where $A_i = \{v_j:j\geq i\}$. Then let $\Bb$ be the closure of $B$ under finite unions and intersections. It is easy to see that $\Bb$ contains $r(\alpha)$ for all $\alpha\in\Ll(E^*)$, and is closed under relative ranges.
\end{exm}

\begin{theorem} The $C^*$-algebra $C^*(E_n,\Ll,\Bb)$ is isomorphic to the $C^*$-algebra $C(S_{q,\beta}^{2n})$.
\end{theorem}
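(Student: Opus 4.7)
The approach is to reduce the statement to an isomorphism of $C^*$-correspondences via the functorial machinery developed earlier. By Proposition \ref{prop:labelledgraphcorrespondence} we have $C^*(E_n,\Ll,\Bb) \cong \Oo_{X(E_n)}$, and Section \ref{sec:mirrorsphere} gives $C(S^{2n}_{q,\beta}) \cong \Oo_{X\dsz Y}$. Thus it suffices to produce an isomorphism
\[
(X(E_n), A(E_n)) \cong (X \dsz Y, A \dsc B)
\]
in the category $\Cc$ and then apply the functor $F$.

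The isomorphism is constructed by matching generators directly. On the coefficient algebra, I would send $p_{\{u_i\}} \mapsto (P_i, R_i)$ for $1 \leq i \leq n-1$, $p_{\{w_1\}} \mapsto (P_{n+1}, 0)$, and $p_{A_1 \cup \{w_2\}} \mapsto (P_n, R_n)$, and map the decreasing chain $\{p_{A_i}\}_{i \geq 1}$ into the analogous chain in $A \dsc B$ built from $(0, R_{n+1})$ together with the $(0, Q_j)$ (with the indexing determined by the relative-range relations among the $A_i$). On the module side, the unique labels $e_{i,j}$ map to $(w_{i,j}, x_{i,j})$; the shared label $f_i$ yields a single partial isometry $s_{f_i}$ whose decomposition via the orthogonal range projections in $r(f_i) = \{w_1\} \sqcup (A_1 \cup \{w_2\})$ recovers $(w_{i,n+1}, 0)$, $(w_{i,n}, x_{i,n})$, and the family $(0, x_{i,n,j})$; and the labels $g$, $h$ correspond to elements built from $(w_{n,n}, y)$, $(0, y')$ and $(0, y_i)$, reflecting the infinite staircase $w_2 \to v_1 \to v_2 \to \cdots$ together with the returns via $h$ to the sink $w_1$.

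To verify that these assignments extend to a morphism $(\psi_X, \psi_A)$ of $C^*$-correspondences, one checks conditions (C1)--(C4) of Definition \ref{defn:morf}. Conditions (C1) and (C2) are direct computations using the explicit inner product and actions on $X \dsz Y$. Condition (C3) follows since $J_{X(E_n)}$ is generated by the projections $p_A$ with $L^1(A)$ finite and non-empty, whose images are contained in $J_{X \dsz Y}$ by Lemmas \ref{lemma:idealinA} and \ref{lemma:idealinB}. Condition (C4) is the essential content: it amounts to matching the Cuntz--Krieger relation (4) of Definition \ref{defn:labelledgraphrepn} with the covariance implicit in the compact-operator decompositions of $\phi_{X\dsz Y}$. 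An inverse morphism is defined symmetrically on the generators of $(X \dsz Y, A \dsc B)$ (for example $(w_{i,n}, x_{i,n}) \mapsto s_{f_i}(p_{A_1 \cup \{w_2\}} - p_{A_1})$, and so on), and a check on generators shows the two morphisms are mutually inverse.

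The principal obstacle is the combinatorial bookkeeping around the shared labels $f_i$, $g$, $h$ and the infinite chain of vertices $v_i$: these force one to decompose each of $s_{f_i}$, $s_g$, $s_h$ into infinite sums of mutually orthogonal pieces via the relative-range projections, and to verify that the resulting pieces obey precisely the inner-product, left- and right-action relations defining the corresponding generators in $X \dsz Y$. Once this is handled, the functor $F$ transports the correspondence isomorphism to an isomorphism $\Oo_{X(E_n)} \cong \Oo_{X \dsz Y}$, yielding the desired $C^*(E_n, \Ll, \Bb) \cong C(S^{2n}_{q,\beta})$.
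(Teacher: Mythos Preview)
Your overall strategy coincides with the paper's: both arguments pass through a morphism of $C^*$-correspondences between $(X\dsz Y, A\dsc B)$ and $(X(E_n), A(E_n))$. The paper builds the morphism in the direction $(X\dsz Y, A\dsc B) \to (X(E_n), A(E_n))$, verifies (C1)--(C4), and then invokes the gauge-invariant uniqueness theorem (Theorem~\ref{thm:gaugeinvariantuniquenesstheorem}) together with surjectivity onto the generators of $\Oo_{X(E_n)}$ to conclude that the induced map of $C^*$-algebras is an isomorphism. This avoids constructing and checking an explicit inverse morphism; your plan to exhibit mutually inverse correspondence morphisms and apply $F$ would also work in principle, but is the more laborious variant.

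That said, the concrete assignments you write down are wrong and, as stated, cannot define a $*$-homomorphism. You send $p_{A_1 \cup \{w_2\}} \mapsto (P_n, R_n)$ and then propose to map the chain $\{p_{A_i}\}_{i\geq 1}$ to elements built from $(0, R_{n+1})$ and the $(0, Q_j)$. But $A_1 \subset A_1 \cup \{w_2\}$ forces $p_{A_1} \leq p_{A_1 \cup \{w_2\}}$, whereas $(0, R_{n+1})$ is orthogonal to $(P_n, R_n)$, so no such order-preserving map exists. The correct matching (the inverse of the paper's $\rho_{A\dsc B}$) is $p_{A_1} \mapsto (P_n, R_n)$, $p_{A_1 \cup \{w_2\}} - p_{A_1} \mapsto (0, R_{n+1})$, and $p_{A_i} - p_{A_{i+1}} \mapsto (0, Q_i)$; in particular every $p_{A_i}$ lands beneath $(P_n, R_n)$, not near $(0, R_{n+1})$. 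Your sample ``inverse'' assignment $(w_{i,n}, x_{i,n}) \mapsto s_{f_i}(p_{A_1\cup\{w_2\}} - p_{A_1})$ is likewise off: the paper sends this element to $s_{f_i} p_{A_1}$, and a (C1) check shows your choice yields the wrong inner product. Finally, the label $h$ corresponds to $(w_{n,n+1}, 0)$, the edge into the sink on the $X$-side, which your description omits. With these corrections your programme goes through, though the paper's appeal to gauge-invariant uniqueness is the cleaner route.
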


\begin{proof} We prove this by showing that there exists a morphism of $C^*$-correspondences $(\rho_{X\dsz Y},\rho_{A\dsc B}):(X\dsz Y, A\dsc B) \to (X(E_n),A(E_n))$ that induces a $C^*$-algebra isomorhism $P:\Oo_{X\dsz Y} \to \Oo_{X(E_n)} \cong C^*(E_n,\Ll,\Bb)$. First, define the linear map $\rho_{X\dsz Y}:X\dsz Y \to C^*(E_n,\Ll,\Bb)$ by
\[
 \begin{array}{l}
  \rho_{X\dsz Y}(w_{i,j},x_{i,j}) = S_{e_{i,j}}P_{\{u_j\}}, \ 1\leq i,j \leq n-1 \\
  \rho_{X\dsz Y}(w_{i,n},x_{i,n}) = S_{f_i} P_{A_1}, \ 1\leq i \leq n-1 \\
  \rho_{X\dsz Y}(w_{i,n+1},x_{i,n+1}) = S_{f_i}(P_{\{w_1\}}+(P_{\{w_2\}\cup A_1} - P_{A_1})), \ 1\leq i \leq n-1 \\
  \rho_{X\dsz Y}(w_{n,n},y) = S_g P_{A_1} \\
  \rho_{X\dsz Y}(w_{n,n+1},0) = S_h P_{\{w_1\}} \\
  \rho_{X\dsz Y}(0,x_{i,n,j}) = S_{f_i}(P_{A_j}-P_{A_{j+1}}), \ 1\leq i \leq n-1, j \geq 1 \\
  \rho_{X\dsz Y}(0,y') = S_g(P_{A_1}-P_{A_2}) \\
  \rho_{X\dsz Y}(0,y_i) = S_g(P_{A_{i+1}}-P_{A_{i+2}}), i \geq 1
 \end{array}
\]
and a map $\rho_{A\dsc B}:A\dsc B\to A(E_n)$ by
\[
 \begin{array}{ll}
  \rho_{A\dsc B}(P_i,R_i) = P_{\{u_i\}} & \rho_{A\dsc B}(P_n,R_n) = P_{A_1} \\
  \rho_{A\dsc B}(P_{n+1},0) = P_{\{w_1\}} & \rho_{A\dsc B}(0,R_{n+1}) = P_{\{w_2\}\cup A_1} - P_{A_1} \\
  \rho_{A\dsc B}(0,Q_i) = P_{A_i}-P_{A_{i+1}}. &
 \end{array}
\]
By definition, $\rho_{X\dsz Y}$ is a linear map and it is easy to see that $\rho_{A\dsc B}$ is an injective homomorphism. Furthermore, routine calculations show that this pair $(\rho_{A\dsc B},\rho_{X\dsz Y})$ satisfies conditions (C1) and (C2). For (C4), we must find the ideal $J_{X\dsz Y}$. We know that $J_X\subset A$ is the ideal generated by the projections $P_i$ where $1\leq i\leq n$ and the ideal $J_Y=B$, so it is not hard to see that $J_{X\dsz Y}$ is the ideal generated by the projections $(P_i,R_i)$ where $1\leq i\leq n, (0,R_{n+1})$ and $(0,Q_j)$ for $j\geq 1$. Now by noticing that $L^1_A$ is finite for all $A\in \Bb$ and $\{w_1\} \in \Bb$ is the only subset satisfying $L^1_{\{w_1\}} = \emptyset$, we see that (C3) is satisfied. Then it follows easily from condition (4) of Definition \ref{defn:labelledgraphrepn} that $(\rho_{A\dsc B},\rho_{X\dsz Y})$ satisfies (C4).

So we have a $C^*$-correspondence morphism, now all that remains to be seen is that this $C^*$-correspondence morphism induces an isomorphism $\Oo_{X\dsz Y} \to \Oo_{X(E_n)}$. To do this we use the gauge invariant uniqueness theorem. Firstly, if we compose our morphism with the universal covariant representation $(\pi_{X(E_n)}, \pi_{A(E_n)})$ of $(X(E_n),A(E_n))$ we get a covariant representation $(\pi_{X(E_n)}\circ\rho_{X\dsz Y}, \pi_{A(E_n)}\circ \rho_{A\dsc B})$ of $(X\dsz Y,A\dsc B)$ on $\Oo_{X(E_n)}$. We know from Section 5 in \cite{BaPa} that there is a gauge action on $C^*(E_n,\Ll,\Bb)$ and it is clear from the definitons that this also defines a gauge action on $C^*(\pi_{X(E_n)}\circ\rho_{X\dsz Y}, \pi_{A(E_n)}\circ \rho_{A\dsc B})$. Furthermore, it is not hard to see that $\rho_{A\dsc B}$ is injective and hence Theorem \ref{thm:gaugeinvariantuniquenesstheorem} implies that $P:\Oo_{X\dsz Y} \to C^*(\pi_{X(E_n)}\circ\rho_{X\dsz Y}, \pi_{A(E_n)}\circ \rho_{A\dsc B})$ is an isomorphism. Finally, the image of the covariant representation $(\pi_{X(E_n)}\circ\rho_{X\dsz Y},\pi_{A(E_n)}\circ\rho_{A\dsc B})$ contains all the generators of $\Oo_{X(E_n)}$, so $C^*(\pi_{X(E_n)}\circ\rho_{X\dsz Y}, \pi_{A(E_n)}\circ \rho_{A\dsc B}) = \Oo_{X(E_n)}$ and we have the required isomorphism.
\end{proof}

As a final result, we prove that - at least in the 2 dimensional case - the mirror quantum sphere cannot be realised as a graph algebra. This result was originally stated in \cite{HaMaSz}, though there was no proof given. We present the full proof here to fill this gap in the literature.

\begin{theorem}
There is no directed graph $E$ such that $C^*(E) \cong C(S^2_{q,\beta})$.
\end{theorem}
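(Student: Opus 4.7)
The plan is to argue by contradiction using the Busby invariant of the defining extension.

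From the pullback description there is a short exact sequence
$$0 \to \Kk \oplus \Kk \to C(S^2_{q,\beta}) \to C(S^1) \to 0,$$
and the first step is to compute its Busby invariant. In the pullback model, the unitary generator $z \in C(S^1)$ is lifted by the pair $(s, s^*)$, where $s$ is the isometric generator of the Toeplitz algebra $C(\DD_q^2)$ with Fredholm index $-1$ while $s^*$ has index $+1$. Hence the invariant in $\operatorname{Ext}(C(S^1), \Kk \oplus \Kk) \cong \ZZ \oplus \ZZ$ has entries of \emph{opposite signs} — the qualitative fingerprint of the flip. (For the non-mirrored Euclidean sphere, the same computation gives same-sign entries, which is precisely why that version admits a graph description.)

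Next, suppose for contradiction that $C^*(E) \cong C(S^2_{q,\beta})$ for some directed graph $E$. Since the algebra is unital with finitely generated $K$-theory, $E^0$ may be taken finite (reducing to the row-finite setting by Drinen--Tomforde desingularisation if necessary). The ideal $\Kk \oplus \Kk \subset C^*(E)$ is necessarily gauge invariant, so by the classification of gauge-invariant ideals it corresponds to a saturated hereditary subset $H \subset E^0$ with $I_H \cong \Kk \oplus \Kk$ and $C^*(E)/I_H \cong C(S^1)$. The quotient condition forces $E \setminus H$ to consist of a single vertex $v$ carrying a single loop $e$, and the ideal condition forces $H$ to split as a disjoint union of exactly two infinite tails $T_1, T_2$, each contributing one $\Kk$ summand.

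Finally, I would read off the Busby invariant from the graph. The Cuntz--Krieger relation at $v$ yields $p_v = s_e s_e^* + \sum_{f : s(f) = v,\, r(f) \in H} s_f s_f^*$, so the defect $p_v - s_e s_e^*$ is a projection in $I_H$ that splits as a sum of two projections of ranks $n_1, n_2 \geq 0$ in the respective $\Kk$ summands, where $n_i$ is the number of edges from $v$ into $T_i$. A direct index computation identifies the Busby invariant with $(-n_1, -n_2) \in \ZZ \oplus \ZZ$, whose entries have the \emph{same sign}. This contradicts the opposite-sign invariant obtained from the pullback, completing the proof. The main obstacle is the rigidity of the structural description of $E$: one must rule out more exotic configurations (multiple loop vertices, breaking vertices, additional tail structure), which requires careful use of the classification of gauge-invariant ideals together with the essentially unique realisations of $\Kk$ and $C(S^1)$ as graph $C^*$-algebras up to the obvious equivalences.
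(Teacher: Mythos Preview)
Your overall strategy matches the paper's: identify $\Kk\oplus\Kk$ as a gauge-invariant ideal, reduce to $E^0\setminus H = \{w_0\}$ with a single loop, then derive a $K$-theoretic obstruction. The paper's endgame, however, is phrased directly in $K_0(C^*(E))$ rather than in $\operatorname{Ext}$: it takes the smallest hereditary set $V\ni w_0$, forms $T = \sum_{s(e)\in V} S_e$ and $P=\sum_{v\in V}P_v$, observes $T^*T\geq P$, extracts a partial isometry $U$ via polar decomposition with $U^*U=P$ and $UU^* = P - (P_{w_1}+P_{w_2})$, and concludes $[P_{w_1}+P_{w_2}]=0$ in $K_0(C^*(E))$ --- a relation that fails in $K_0(C(S^2_{q,\beta}))$. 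Your opposite-sign Busby invariant is essentially the boundary-map version of this same fact, so the two arguments are equivalent via the six-term sequence; the paper's formulation has the advantage that it works uniformly without pinning down the fine structure of $H$.

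Two points in your write-up need repair. First, Drinen--Tomforde desingularisation adds an infinite tail at every sink and infinite emitter, so it destroys finiteness of $E^0$ and hence unitality; you cannot use it to ``reduce to the row-finite setting'' while keeping $E^0$ finite. The paper instead argues directly from $K_0(C(S^2_{q,\beta}))\cong\ZZ\oplus\ZZ$ that infinite emitters are impossible. Second, your claim that $H$ ``splits as a disjoint union of exactly two infinite tails'' is not justified and not true in general: all that $I_H\cong\Kk\oplus\Kk$ forces is that the restriction of $E$ to $H$ is acyclic with exactly two sinks, and $H$ can be an arbitrary finite acyclic graph with that property. Your index computation survives this (the defect $p_{w_0}-s_e s_e^*$ is still a genuine projection in $I_H$, hence has non-negative rank in each $\Kk$ summand), but you should state the argument at that level of generality rather than assuming a tail shape. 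You also omitted the reason $\Kk\oplus\Kk$ is gauge-invariant in $C^*(E)$: it is the commutator ideal, hence preserved by every automorphism, including the gauge automorphisms.
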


\begin{proof}
Suppose, by way of contradiction, that $E$ is a directed graph such that $C^*(E) \cong C(S^2_{q,\beta})$. 
Since $C^*(E)$ is unital, graph $E$ has only finitely many vertices, but a priori it is possible that $E$ has 
infinitely many edges. There exists an exact sequence 
$$ 0 \longrightarrow \Kk\oplus\Kk \longrightarrow C^*(E) \longrightarrow C(S^1) \longrightarrow 0, $$
see \cite{HaMaSz,HoSz3,HoSz}. It follows that $\Kk\oplus\Kk$ is the commutator ideal of $C^*(E)$ and 
thus it is invariant under all automorphisms of $C^*(E)$. In particular, the ideal $\Kk\oplus\Kk$ is 
invariant under the gauge action of the circle group on $C^*(E)$. Thus, by 
Theorem 3.6 and Corollary 3.5 of \cite{BHRS}, there exists a (non-empty) saturated hereditary 
subset $H$ of $E^0$ and a subset $B$ of $\hf$ such that $\Kk\oplus\Kk$ equals $J_{H,B}$ and 
$C(S^1)\cong C^*((E/H)\setminus\beta(B))$. (See the definitions of the relevant symbols in \cite{BHRS}.) 
It is not difficult to verify that there exists precisely one graph (comprised of one vertex and one edge) whose 
corresponding $C^*$-algebra is $C(S^1)$. It follows that $B=\hf$ and there is a vertex $w_0$ such that $E^0\setminus H=\{w_0\}$. Furthermore, there is exactly one edge $f$ with both the source 
$s(f)$ and the range $r(f)$ equal to $w_0$. There are no edges from $H$ to $w_0$ but there may exist 
edges from $w_0$ to $H$. 

Since the ideal $\Kk\oplus\Kk$ is gauge-invariant, it is itself isomorphic to a graph algebra, 
\cite[Lemma 1.6]{DHS}, and  the underlying graph contains the restriction of $E$ to $H$ as a subgraph. 
As $\Kk\oplus\Kk$ is an AF algebra, it follows that $E$ does not contain any loops other than $(f)$ (see 
\cite[Theorem 2.4]{KPR} and \cite[Remark 5.4]{RS}).  Consequently, $H$ must contain some sinks. Since 
each sink gives rise to an ideal isomorphic to the compacts (on some Hilbert space) 
and $C^*(E)$ contains precisely two such ideals, it follows that $H$ contains exactly two sinks, 
say $w_1,w_2$. Now, it is a consequence of the formula for $K$-theory of a graph algebra 
(for example, see  \cite[Theorem 6.1]{BHRS}, \cite[Theorem 3.1]{DT} or \cite[Proposition 2]{Sz}) 
that $E$ cannot contain vertices emitting infinitely many edges. Indeed, in such a case the 
$K_0$ group of $C^*(E)$ could not be isomorphic to $\ZZ\oplus\ZZ$, as the $K_0$ group 
of $C(S_{q,\beta}^2)$ (see \cite{HaMaSz} and \cite[Theorem 5.3]{HoSz}).  
Consequently, $E$ is a finite graph. 

Let $V$ be the smallest hereditary subset of $E^0$ containing $w_0$. Then both $w_1$ and $w_2$ belong 
to $V$. Indeed, otherwise $C^*(E)$ would admit a non-trivial splitting as a direct sum by 
\cite[Theorem 4.1]{H}. This however is not possible for $C(S^2_{q,\beta})$, in view of the 
description of its irreducible representations, \cite{HaMaSz}. We now define 
$$ T=\sum_{s(e)\in V}S_e \;\;\; \text{and} \;\;\; P=\sum_{v\in V}P_v.  $$
$T$ is an element of the corner $C^*$-subalgebra $PC^*(E)P$ and we have $T^*T\geq P$. Thus 
$T$ admits polar decomposition in $PC^*(E)P$, and hence also in $C^*(E)$: 
$$ T=U|T|. $$ 
By construction, $U$ is a partial isometry with domain projection $U^*U=P$. It is also clear that the 
range projection of $U$ is the sum of all those vertex projections $P_v$ such that $v\in V$ emits   
at least one edge. Thus we have 
$$ UU^*=P-(P_{w_1}+P_{w_2}). $$ 
Consequently, in the $K_0$ group of $C^*(E)$, we have 
$$ [P_{w_1}+P_{w_2}]=0, $$
where $P_{w_1}$ and $P_{w_2}$ are minimal projections in $\Kk\oplus 0$ and $0\oplus\Kk$, 
respectively. However, such relation does not hold in the $K_0$ group of $C(S^2_{q,\beta})$ 
(see  \cite{HaMaSz} and \cite[Theorem 5.3]{HoSz}). This is a contradiction with 
$C^*(E) \cong C(S^2_{q,\beta})$, and the proof is complete. 
\end{proof}

\vspace{6mm}\noindent
David Robertson\\
Department of Mathematics and Computer Science \\
The University of Southern Denmark \\
Campusvej 55, DK-5230 Odense M, Denmark \\
E-mail: dir@imada.sdu.dk

\vspace{5mm}\noindent
Wojciech Szyma{\'n}ski\\
Department of Mathematics and Computer Science \\
The University of Southern Denmark \\
Campusvej 55, DK-5230 Odense M, Denmark \\
E-mail: szymanski@imada.sdu.dk

\end{document}